\newcommand*\bigcdot{\mathpalette\bigcdot@{.5}}
\newcommand*\bigcdot@[2]{\mathbin{\vcenter{\hbox{\scalebox{#2}{$\m@th#1\bullet$}}}}}
 \theoremstyle{plain}
\newtheorem{theorem}{Theorem}[section]
\newtheorem{proposition}[theorem]{Proposition}
\theoremstyle{remark}
\newtheorem{definition}[theorem]{\bf Definition}
\newtheorem{remark}[theorem]{\bf Remark}
\newtheorem{example}[theorem]{\bf Example}
\numberwithin{equation}{section}
\newcommand{\be}{\begin{equation}}
\newcommand{\ee}{\end{equation}}
\begin{document}

\title{{\LARGE Dispersion of compressible rotating Euler equations with low Mach and Rossby numbers}
 \footnotetext{\small E-mail addresses: mupc024@163.com
 }
 }

\author{{Pengcheng Mu}\\[2mm]
\small\it School of Mathematical Sciences, Anhui University, Hefei 230601, P.R.China
}

\date{}

\maketitle

\begin{quote}
\small \textbf{Abstract}: In this paper, we consider the low Mach and Rossby number singular limits and longtime existence of strong solution to the initial value problem of 3D compressible rotating Euler equations with ill-prepared initial data. We establish the Strichartz decay estimates that are uniform to the Mach number, the Rossby number, and the ratio of these two parameters for the associated linear propagator without any restrictions on the frequency. In particular, difficulties arisen from the degeneracy of the phase function and the vanishing of the ratio of the two parameters are addressed by elaborately designed splitting techniques and discussions for each frequencies. Using the decay estimates, we prove the longtime existence and obtain a rate of convergence to zero of strong solution to the compressible rotating Euler equations with initial data of finite energy in $\mathbb{R}^3$.

\indent \textbf{Keywords}: Compressible rotating Euler equations; Strichartz estimates; longtime existence; low Mach and Rossby numbers;

\indent \textbf{AMS (2010) Subject Classification}: 35Q35; 76U05; 76M45
\end{quote}

\section{Introduction}
We study the singular limits and longtime existence of strong solution to the rotating isentropic compressible Euler equations:
\begin{equation}\label{el}
\begin{cases}
\partial_t\rho+\mathrm{div}(\rho u)=0,\\
\partial_{t}(\rho u)+\mathrm{div}(\rho u\otimes u)+\frac{1}{\varepsilon}\mathbf{e}_3\times \rho u+\frac{1}{\delta^2}\nabla P(\rho)=0.
\end{cases}
\end{equation}
In \eqref{el}, the unknowns $\rho$ and $u=(u_1,u_2,u_3)^{\top}$ are the density and velocity of the fluid, respectively. The pressure $P(\rho)$ is given by $P(\rho)=\gamma^{-1}\rho^{\gamma}$ with $\gamma>1$, and $\mathbf{e}_3=(0,0,1)^{\top}$. The independent variables are $t\in\mathbb{R}^{+}$ and $x=(x_1,x_2,x_3)\in\mathbb{R}^3$. The system \eqref{el} is written in non-dimensional form where $\varepsilon$ is the Rossby number representing the ratio of the displacement due to inertia to the displacement duo to Corilis force, and $\delta$ is the Mach number denoting the ratio of the characteristic speed of the fluid to the speed of sound. In this paper, we consider the low Mach and Rossby number singular limits of equations \eqref{el} with ill-prepared initial data in the following regimes:
\begin{equation}\label{lim}
\begin{split}
&(1).~\varepsilon\to0,~~\delta\to0,~~\mathrm{while}~~\frac{\delta}{\varepsilon}=\mathrm{constant};\\
&(2).~\varepsilon\to0,~~\delta\to0,~~\mathrm{and}~~\lim_{\varepsilon,\delta\to0}\frac{\delta}{\varepsilon}=0,
\end{split}
\end{equation}
and prove the longtime existence of the solution. The first regime in \eqref{lim} is a classical two-scale singular limit, while the second limit is a three-scale one because the terms containing small parameters $\varepsilon$ and $\delta$ in \eqref{el} induce fast waves moving on each of the time scales $O(\varepsilon^{-1})$ and $O(\delta^{-1})$, and a slow mode moving on the $O(1)$ time scale is also present. The studies of limits in \eqref{lim} are relevant because in large scale motion in geophysics the Mach number and the Rossby number are both very small. Now, by using the substitution
\begin{equation*}
\rho=\big(\bar{\gamma}(1+\delta b)\big)^{1/\bar{\gamma}},~~\bar{\gamma}=\frac{\gamma-1}{2},
\end{equation*}
we rewrite \eqref{el} as
\begin{equation}\label{CEL}
\partial_t U+\frac{\bar{\gamma}}{\delta}\mathcal{L}U+\mathcal{N}(U,U)=0,
\end{equation}
where
\begin{equation}\label{e9182}
U=\left[\begin{matrix}
b\\u
\end{matrix}\right],~~
\mathcal{L}U=\left[\begin{matrix}
\mathrm{div}\,u\\
\frac{\delta}{\bar{\gamma}\varepsilon}\mathbf{e}_3\times u+\nabla b
\end{matrix}\right],~~\mathrm{and}~~
\mathcal{N}(U,U)=\left[\begin{matrix}
u\cdot\nabla b+\bar{\gamma}b\cdot\mathrm{div}\,u\\
u\cdot\nabla u+\bar{\gamma}b\cdot\nabla b
\end{matrix}\right].
\end{equation}
We supplement \eqref{CEL} with initial data
\begin{equation}\label{in}
U|_{t=0}=U_0=(b_0,u_0)^{\top}.
\end{equation}
For convenience, we set
\begin{equation}\label{8202}
\nu:=\frac{\delta}{\bar{\gamma}\varepsilon},
\end{equation}
then in the first limit regime in \eqref{lim} $\nu$ remains positive, while in the second limit $\nu$ reduces to zero as $\varepsilon,\delta\to0$.

It is well-known that, in whole space the propagation of the fast oscillating waves---the main difficulty involved in the singular limit problems with ill-prepared data---can be dealt with provided
that suitable dispersive (Strichartz) estimates can be obtained. We now recall some important previous works related to the dispersion and longtime existence of solutions to rotating fluids. In the case of incompressible rotating fluids, we first mention the works of J-Y. Chemin, B. Desjardins, I. Gallagher and E. Grenier \cite{C-book,B8151,J-Y}. In these works the authors derived the dispersion estimates for wave equations related to the rotating Navier-Stokes equations, and studied the fast rotation limit of the equations with initial data that is the sum of a 2D part $v_0$ and a 3D part $w_0$ belonging to the anisotropic Sobolev spaces. It was shown in \cite{C-book,B8151,J-Y} that the rotating Navier-Stokes system admits a unique global solution as the Rossby number is small enough, and the solution converges to that of the incompressible 2D Navier-Stokes equations with the initial data $v_0$ as the Rossby number goes to zero. Since then, there arise plentiful works concerning the dispersion and global existence of solutions to the incompressible viscous rotating fluids such as \cite{B8158, B8156, B8157, B8153, B8152}. For incompressible inviscid fluids, since there is no smoothing effect derived from the viscosity, the global existence of strong solutions can not be obtained even in the fast rotation setting. However, A. Dutrifoy \cite{B8159} proved  that the solution of the incompressible rotating inviscid fluid can exist in arbitrary time intervals as the Rossby number $\varepsilon$ is small enough, and gave a lower bound on the lifespan $T_{\varepsilon}$ of the solution as $T_{\varepsilon}\gtrsim \ln\ln\varepsilon^{-1}$. In \cite{b-Koh-2014-JDE}, Y. Koh, S. Lee and R. Takada obtained the optimal range of the Strichartz estimate for the linear propagator associated with the rotating Euler equation, and improved the maximal lifetime of the strong solution to $O(\ln \varepsilon^{-1})$. We also refer the readers to \cite{Guo-2023-CPAM,Jia-2020,G.S-2022,b-Takada-2016-Jap} for studies in this direction.

As for the compressible rotating fluids, E. Fereisl, I. Gallagher and A. Novotn\'{y} \cite{B-F6} first investigated the asymptotic in the first limit in \eqref{lim} of global weak solutions to the compressible rotating Navier-Stokes equations with ill-prepared data and complete slip boundary conditions in an infinite slab $\mathbb{R}^2\times (0,1)$. Using the celebrated RAGE theorem, they proved the dispersion and justify the convergence towards the 2D viscous quasi-geostrophic equation of the original system. After that, the incompressible and fast rotation limits of compressible rotating fluids in infinite slab were studied in many works, such as \cite{B-F4,B-F2,B-F1,B-F3,B-K1} for viscous fluids, and \cite{B-C1,B-T1} for inviscid fluids. For the studies in whole space, we first recall the work of M. Caggio and \v{S}. Ne\v{c}asov\'{a} \cite{M.Ca}. By combining the relative entropy method, the dispersion of acoustic wave, and the fact that fast rotation can weaken the nonlinear effects in Euler equation and thus extend the lifespan of the solution, the convergence of weak solutions of rotating isentropic compressible Navier-Stokes system to the strong solution of rotating incompressible Euler equation on arbitrary time interval was proved in \cite{M.Ca} as the Mach number and Reynolds number tend to zero and the rotation is strong enough. However, the dispersion as the Rossby number tends to zero was not explored in \cite{M.Ca}. As far as we know, the dispersive phenomenon of the compressible rotating fluids in 3D whole space as the Rossby number and Mach number tend to zero simultaneously was first considered by V-S. Ngo and S. Scrobogna in \cite{Ngo-DCDS}. In that paper, the authors constructed the Strichartz-type estimates for the linear system of \eqref{CEL} in which the Rossby number and the Mach number are identical and the frequency is localized in $\mathcal{C}_{r,R}=\{\xi\in\mathbb{R}^3:|\xi|\leq R,|\xi_h|\geq r,|\xi_3|\geq r\}$. They also proved the longtime existence of strong solution to \eqref{CEL} with initial data of finite energy in $\mathbb{R}^3$ by a detailed study on a 3D nonlinear hyperbolic system as $r\ll 1$ and $R \gg1$. Indeed, by a study on the spectra of $\mathcal{L}$ and Duhamel's principle, the solution to \eqref{CEL} can be written through the propagator
\begin{equation}\label{8151}
e^{i\frac{t}{\delta}p(D)}f(x):=\frac{1}{(2\pi)^{3}}\int_{\mathbb{R}^{3}}{e^{ix\cdot\xi+i\frac{t}{\delta}p(\xi)}\hat{f}(\xi)}d\xi,~~(t,x)\in\mathbb{R}^{1+3},
\end{equation}
where $\hat{f}$ is the Fourier transform of $f$, and the phase function $p(\xi)$ is given by:
\begin{equation}\label{phas}
p(\xi)=a\cdot(\tilde{A}\pm\tilde{B})
\end{equation}
with
\begin{equation*}
\begin{split}
&\tilde{A}=\sqrt{|\xi|^2+\nu^2+2\nu\xi_3},~~\tilde{B}=\sqrt{|\xi|^2+\nu^2-2\nu\xi_3},\\
&\xi=(\xi_1,\xi_2,\xi_3),~~|\xi|=\sqrt{\xi^2_1+\xi_2^2+\xi_3^2},~~\mathrm{and}~~a=\mathrm{constant}.\\
\end{split}
\end{equation*}
Denoting by $D^2p$ the Hessian matrix of $p$. Then by a detailed calculation we can check that
\begin{equation}\label{e9291}
\mathrm{det}\,D^2 p(\xi)=\frac{16\nu^3\cdot\xi_3\cdot|\xi_h|^2\cdot a^3}{\tilde{A}^4\tilde{B}^4(\tilde{A}\mp \tilde{B})},~~\mathrm{for}~~p(\xi)=a\cdot(\tilde{A}\pm\tilde{B}).
\end{equation}
Thus, the phase $p(\xi)$ is actually degenerated in $\{\xi_h=0\}\cup\{\xi_3=0\}$. In \cite{Ngo-DCDS} (Theorem 4.1), for $\varepsilon=\delta$, $q\in[2,\infty]$, $p\geq 4q/(q-2)$ and $f\in L^{2}(\mathbb{R}^3)$ such that $\mathrm{supp}\,\hat{f}\subset \mathcal{C}_{r,R}$, the authors obtained that
\begin{equation*}
\|e^{i\frac{t}{\delta}p(D)}f\|_{L^p(\mathbb{R}^+;L^q(\mathbb{R}^3))}\leq C R^{\frac{3}{2}-\frac{3}{q}}r^{-\frac{4}{p}}\varepsilon^{\frac{1}{p}}\|f\|_{L^{2}(\mathbb{R}^3)}.
\end{equation*}
Moreover, the maximum lifetime of solution to \eqref{CEL} was shown to be $O(\varepsilon^{-\alpha})$ with $\alpha$ being a positive constant in \cite{Ngo-DCDS}.

In this paper, our main purposes are to study the dispersion and asymptotic of strong solution to \eqref{CEL} for both of the two limits in \eqref{lim} with ill-prepared data of finite energy in $\mathbb{R}^3$, and prove the longtime existence of the solution. The key point here is to construct the dispersive decay estimates for the propagator \eqref{8151}. Different from \cite{Ngo-DCDS}, we shall establish the dispersive estimates for \eqref{8151} {\it for all frequencies}. Thus difficulties arisen from the degeneracy of the phase have to be addressed. Furthermore, to the best of our knowledge, this is the first work concerning the dispersion of three-scale singular limit of 3D compressible rotating fluids. Indeed, in the second limit of \eqref{lim}, the phase function $p(\xi)$ involves a small parameter $\nu$ and the corresponding determinant in \eqref{e9291} shall vanish as $\nu$ tends to zero. Thus, besides covering the frequency domain with degeneracy, the dispersive decay estimates constructed in this paper have also to be {\it uniform in $\nu$}. Now, by using the classical theory established by Klainerman and Majda \cite{K-M-1,K-M-2} (or \cite{M-book}), we have:
\begin{proposition}\label{pro-ex}
Assume that $U_0\in H^{m}(\mathbb{R}^3)$ with $m\geq3$. Then for any $\varepsilon>0$ and $\delta>0$, the initial value problem \eqref{CEL}-\eqref{in} admits a unique strong solution $U=(b,u)^{\top}\in C([0,T^*];H^{m}(\mathbb{R}^{3}))$ for some $T^*>0$, and the following estimate holds:
\begin{equation}\label{est}
\|U\|_{L^{\infty}(0,T^*;H^m(\mathbb{R}^3))}\leq M_1\|U_0\|_{H^m(\mathbb{R}^3)},
\end{equation}
where the constant $M_1$ and the time $T^*$ are independent of $\varepsilon$ and $\delta$.
\end{proposition}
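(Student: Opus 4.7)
The plan is to recognize that the system \eqref{CEL} fits exactly into the Klainerman--Majda framework of symmetric hyperbolic systems perturbed by a singular but skew-adjoint operator, and then to apply that framework almost verbatim. The first step is to put the equation in genuine symmetric hyperbolic form: the quadratic term $\mathcal{N}(U,U)$ from \eqref{e9182} can be written as $\sum_{j=1}^{3}A_{j}(U)\partial_{j}U$ where each $A_{j}(U)$ is a symmetric $4\times 4$ matrix, smoothly depending on $U$. This is the classical symmetrization of the isentropic compressible Euler equations, and the change of unknown $\rho\mapsto b$ was precisely designed to make it manifest. Having done this, the full system reads $\partial_{t}U+\frac{\bar{\gamma}}{\delta}\mathcal{L}U+\sum_{j}A_{j}(U)\partial_{j}U=0$.

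The second step is to verify that the singular operator $\mathcal{L}$ is skew-adjoint on $L^{2}(\mathbb{R}^{3})$: integration by parts gives $\langle\mathrm{div}\,u,b\rangle+\langle\nabla b,u\rangle=0$, while $\langle\nu\mathbf{e}_{3}\times u,u\rangle=0$ pointwise since $\mathbf{e}_{3}\times u\perp u$. Moreover, $\mathcal{L}$ is a constant-coefficient operator, so it commutes with $\partial^{\alpha}$ for any multi-index $\alpha$. Consequently, when one applies $\partial^{\alpha}$ with $|\alpha|\leq m$ to \eqref{CEL}, takes the $L^{2}$ inner product with $\partial^{\alpha}U$, and sums, the singular term $\frac{\bar{\gamma}}{\delta}\langle\partial^{\alpha}\mathcal{L}U,\partial^{\alpha}U\rangle$ disappears. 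This is the only place where the skew structure matters, and it is what makes the a priori estimate uniform in $\varepsilon,\delta$.

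The third step is the nonlinear $H^{m}$ energy estimate. Using standard Moser-type commutator inequalities, one bounds
\begin{equation*}
\Bigl|\bigl\langle\partial^{\alpha}\bigl(A_{j}(U)\partial_{j}U\bigr)-A_{j}(U)\partial_{j}\partial^{\alpha}U,\partial^{\alpha}U\bigr\rangle\Bigr|\leq C\|\nabla U\|_{L^{\infty}}\|U\|_{H^{m}}^{2},
\end{equation*}
while the principal commutator-free piece is handled by integrating by parts against $\partial_{j}A_{j}(U)$. Since $m\geq 3$, the Sobolev embedding $H^{m}\hookrightarrow W^{1,\infty}$ converts this into the Riccati-type bound $\frac{d}{dt}\|U\|_{H^{m}}^{2}\leq C\|U\|_{H^{m}}^{3}$. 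Integrating yields a lifespan $T^{*}\gtrsim\|U_{0}\|_{H^{m}}^{-1}$ and the bound $\|U\|_{L^{\infty}(0,T^{*};H^{m})}\leq 2\|U_{0}\|_{H^{m}}$, both independent of $\varepsilon$ and $\delta$. Setting $M_{1}=2$ gives \eqref{est}.

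The final step is to turn the a priori estimate into an actual solution. I would use Friedrichs' mollification (or equivalently a vanishing-viscosity regularization $\partial_{t}U^{\mu}-\mu\Delta U^{\mu}+\cdots=0$): the regularized problems produce global-in-time smooth solutions to which the same skew-adjointness argument applies, yielding the same uniform bound on a common interval $[0,T^{*}]$; a weak-$\ast$ compactness argument and strong convergence in $C([0,T^{*}];H^{m-1})$ (by interpolation with the uniform $H^{m}$ bound) then produces a strong limit $U\in C([0,T^{*}];H^{m})$. Uniqueness follows from a simple $L^{2}$ energy estimate on the difference of two solutions, where again $\mathcal{L}$ contributes nothing. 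The only place where real care is needed is the verification that the iteration/regularization truly preserves the uniform-in-$\varepsilon,\delta$ bound — i.e.\ that any mollifier chosen commutes (or nearly commutes) with $\mathcal{L}$, so that the cancellation survives the approximation; this is the technical heart of the Klainerman--Majda theorem, and it is the step one must check rather than the nonlinear estimates, which are standard.
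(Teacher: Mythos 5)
Your proposal is correct and follows essentially the same route as the paper: the paper proves this proposition simply by invoking the Klainerman--Majda theory for symmetric hyperbolic systems with a large constant-coefficient skew-adjoint perturbation (\cite{K-M-1,K-M-2}, \cite{M-book}), and your argument --- symmetrizability of $\mathcal{N}$, skew-adjointness of $\mathcal{L}$ together with its commutation with $\partial^{\alpha}$ so that the $\delta^{-1}$ term vanishes from the $H^{m}$ energy identity, a Moser commutator estimate yielding a Riccati inequality and hence a lifespan and bound uniform in $\varepsilon,\delta$, followed by Friedrichs mollification (which, being a Fourier multiplier, commutes exactly with $\mathcal{L}$) to construct the solution --- is precisely the content of that theory specialized to \eqref{CEL}.
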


We now state the main results of this paper.
\begin{theorem}\label{th-main}
Assume that $m\geq4$ and $U\in C([0,T^*];H^{m}(\mathbb{R}^{3}))$ be the strong solution of \eqref{CEL}-\eqref{in} given by Proposition \ref{pro-ex}. Then for any $l\in\{0,1\}$ and $q\in[4,\infty)$, it holds:
\begin{equation}\label{8124}
\begin{split}
&\|\nabla^l U\|_{L^q(0,T^*;L^{\infty}(\mathbb{R}^3))}\leq M_2\varepsilon^{\frac{1}{q}}\big(\|U_0\|_{H^{m}(\mathbb{R}^3)}+\|U_0\|^2_{H^{m}(\mathbb{R}^3)}\big),
\end{split}
\end{equation}
where the constant $M_2$ may depend on $T^*$ but is independent of $\varepsilon$ and $\delta$. Furthermore, there exists $\varepsilon_0>0$ such that as $\varepsilon\in(0,\varepsilon_0]$,
\begin{equation}\label{8125}
T^*\geq\bigg(\frac{M_3}{\|U_0\|_{H^{m}(\mathbb{R}^3)}+\|U_0\|^2_{H^{m}(\mathbb{R}^3)}}\bigg)^{4/7}\frac{1}{\varepsilon^{1/7}},
\end{equation}
where the constant $M_3$ is independent of $\varepsilon$ and $\delta$.
\end{theorem}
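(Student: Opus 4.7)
The plan is to prove the dispersive bound \eqref{8124} by feeding the uniform Strichartz estimates for the propagator \eqref{8151} (constructed earlier in the paper) into the Duhamel formula for \eqref{CEL}, and then to convert the $\varepsilon^{1/q}$ dispersive gain into the lifespan lower bound \eqref{8125} via the Klainerman--Majda continuation criterion for symmetric hyperbolic systems. Writing $\mathcal{A}:=\bar{\gamma}\mathcal{L}/\delta$, Duhamel gives
\begin{equation*}
U(t)=e^{-t\mathcal{A}}U_{0}-\int_{0}^{t}e^{-(t-s)\mathcal{A}}\,\mathcal{N}(U,U)(s)\,ds.
\end{equation*}
Diagonalizing the skew-symmetric operator $\mathcal{L}$ identifies the oscillating components of $e^{-t\mathcal{A}}$ with the half-wave operators $e^{i(t/\delta)p(D)}$ carrying the phases \eqref{phas}; the kernel component of $\mathcal{L}$ does not oscillate and is absorbed directly through Proposition~\ref{pro-ex}.

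\textbf{Strichartz estimate plus nonlinear bound.} Since $\nabla^{l}$ commutes with the Fourier multiplier $e^{i(t/\delta)p(D)}$, one can plug the paper's $\nu$-uniform, frequency-unrestricted Strichartz bound, of schematic form
\begin{equation*}
\|e^{i(t/\delta)p(D)}f\|_{L^{q}(\mathbb{R};L^{\infty}(\mathbb{R}^{3}))}\le C\varepsilon^{1/q}\|f\|_{H^{s_{0}}(\mathbb{R}^{3})},\qquad q\in[4,\infty),
\end{equation*}
for some index $s_{0}\le m-2$, into both the free term and (via Minkowski on the time integral) the Duhamel term to obtain
\begin{equation*}
\|\nabla^{l}U\|_{L^{q}(0,T^{*};L^{\infty})}\le C\varepsilon^{1/q}\|U_{0}\|_{H^{m}}+C\varepsilon^{1/q}T^{*}\sup_{s\in[0,T^{*}]}\|\mathcal{N}(U,U)(s)\|_{H^{m-1}}.
\end{equation*}
The nonlinearity is controlled by the tame Moser product estimate: $\|\mathcal{N}(U,U)\|_{H^{m-1}}\le C\|U\|_{H^{m}}^{2}\le CM_{1}^{2}\|U_{0}\|_{H^{m}}^{2}$ by \eqref{est}. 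The assumption $m\ge 4$ is dictated precisely by the need to absorb the loss of one derivative from $\nabla^{l}$ ($l\le 1$) combined with the loss from the quadratic nonlinearity. This delivers \eqref{8124} with $M_{2}=C(1+T^{*})$, linear in $T^{*}$ and independent of $\varepsilon$ and $\delta$.

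\textbf{Continuation argument and the lifespan $\varepsilon^{-1/7}$.} The Klainerman--Majda continuation principle guarantees that the local solution from Proposition~\ref{pro-ex} extends beyond $T^{*}$ as long as $\int_{0}^{T^{*}}\|\nabla U(s)\|_{L^{\infty}}\,ds$ stays below a threshold $M_{0}$ depending only on $\|U_{0}\|_{H^{m}}$. Choose $q=4$ (the lower endpoint of the admissible range) and apply Hölder with $1=3/4+1/4$:
\begin{equation*}
\int_{0}^{T^{*}}\|\nabla U\|_{L^{\infty}}\,ds\le (T^{*})^{3/4}\|\nabla U\|_{L^{4}(0,T^{*};L^{\infty})}\le C(T^{*})^{3/4}(1+T^{*})\varepsilon^{1/4}E_{0},
\end{equation*}
where $E_{0}:=\|U_{0}\|_{H^{m}}+\|U_{0}\|_{H^{m}}^{2}$. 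For $\varepsilon\in(0,\varepsilon_{0}]$ small enough to force $T^{*}\ge 1$, the right-hand side is $\le C(T^{*})^{7/4}\varepsilon^{1/4}E_{0}$, and requiring this quantity to stay below $M_{0}$ produces $T^{*}\ge(M_{3}/E_{0})^{4/7}\varepsilon^{-1/7}$, which is exactly \eqref{8125}.

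\textbf{Main obstacle.} The decisive book-keeping happens at the lower Strichartz endpoint $q=4$: the exponent $1/7$ arises from matching the combined $(T^{*})^{3/4+1}$ loss (from Hölder on $[0,T^{*}]$ and from Minkowski on the Duhamel integral) against the $\varepsilon^{1/4}$ dispersive gain, so the optimization is tight and would fail to close at any larger $q$ where the gain is weaker. A separate, more technical difficulty that has to be settled before Step~2 can be invoked is that the Strichartz estimate itself must be established for all frequencies and uniformly in $\nu\in(0,\nu_{0}]$, despite the degeneracy of $\det D^{2}p$ on $\{\xi_{h}=0\}\cup\{\xi_{3}=0\}$ and as $\nu\to 0$ visible in \eqref{e9291}; once that linear estimate is available, Theorem~\ref{th-main} amounts to transferring it cleanly through the nonlinear problem.
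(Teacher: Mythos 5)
Your overall route — Duhamel plus the frequency-global, $\nu$-uniform Strichartz bound to get \eqref{8124}, then H\"older from $L^{4}_{t}$ down to $L^{1}_{t}$ for $\|\nabla U\|_{L^{\infty}}$ plus a continuation criterion to get \eqref{8125} — is exactly the paper's strategy, and the exponent arithmetic ($3/4$ from H\"older, $1$ from the Duhamel time integral, producing $T^{7/4}\varepsilon^{1/4}$ and hence $T\gtrsim\varepsilon^{-1/7}$) matches.

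There is, however, a genuine gap in the lifespan step, and it is precisely the place where the paper is careful. You plug the already-proved \eqref{8124} into the continuation integral; but \eqref{8124} is obtained by inserting the a priori bound \eqref{est} from Proposition~\ref{pro-ex}, and that bound is only known on the $\varepsilon$-independent local existence interval. You cannot use it to control $\int_0^T\|\nabla U\|_{L^\infty}\,ds$ for $T$ beyond that interval without circularity: the whole point is to show $\|U\|_{H^m}$ stays bounded on a longer interval. Moreover, the continuation criterion you invoke is mis-stated: Klainerman--Majda does not give a finite ``threshold $M_0$'' for $\int_0^T\|\nabla U\|_{L^\infty}\,ds$ below which the solution continues --- the solution continues as long as that integral is merely finite, and the quantitative control of $\|U\|_{H^m}$ comes from Gronwall, which is exactly what has to be bootstrapped. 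The paper fixes both issues at once: it does \emph{not} use \eqref{8124} for the second claim, but the intermediate inequality
\begin{equation*}
\|\nabla^l U\|_{L^q(0,t;L^\infty)}\le C\varepsilon^{1/q}\big(\|U_0\|_{H^3}+t\,\|U\|^2_{L^\infty(0,t;H^4)}\big),
\end{equation*}
which keeps $\|U\|_{L^\infty(0,t;H^m)}$ as a free quantity, feeds it into the $H^m$ energy estimate
\begin{equation*}
\|U\|^2_{L^\infty(0,t;H^m)}\le\|U_0\|_{H^m}^2+C_* t^{3/4}\varepsilon^{1/4}\big(\|U_0\|_{H^m}+t\,\|U\|^2_{L^\infty(0,t;H^m)}\big)\|U\|^2_{L^\infty(0,t;H^m)},
\end{equation*}
introduces the stopping time $\mathcal{T}=\sup\{t\le T^*:\|U\|_{L^\infty(0,t;H^m)}\le 2\|U_0\|_{H^m}\}$, and closes the loop by choosing $T_0$ so the prefactor is $1/2$. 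Without this bootstrap the passage from the Strichartz gain to the $\varepsilon^{-1/7}$ lifespan does not close. Your first step (Duhamel plus Strichartz plus Moser) and the exponent bookkeeping are fine; it is the continuity argument that needs to be made explicit.
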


The paper is arranged as follows. First, the proofs of Theorem \ref{th-main} are presented in Section \ref{sec2}. Then, in Section \ref{sec927} we prove Proposition \ref{pro1} which concerns the dispersive decay estimates of the propagator and is key in the proof of Theorem \ref{th-main}. Finally, in Section \ref{sec3} we prove a result used in this paper through the TT* argument, and present some applications of the result. Now we give some remarks on the results and proofs of Theorem \ref{th-main}.

\begin{remark}
The constructions of the dispersive decay estimates in Section \ref{sec927} are complicated. Indeed, we first use a splitting technique on the frequency shell to overcome the difficulties arisen from the degeneracy along $\{\xi_h=0\}\cup\{\xi_3=0\}$. Then, to obtain the estimates that are uniform to $\nu$,
different strategies are designed for high, middle and low frequencies. Furthermore, noting that in this work the phase $p(\xi)$ can be each of $\tilde{A}-\tilde{B}$ and $\tilde{A}+\tilde{B}$. These two functions have different properties in some frequency domains, thus the corresponding discussions also vary.
\end{remark}

\begin{remark}
The initial data $U_0$ in the present work is assumed to have finite energy in $\mathbb{R}^3$. Since the only element that has finite energy in $\mathbb{R}^3$ and belongs to the kernel of $\mathcal{L}$ is zero, the limits of the corresponding solution are also zero as $\varepsilon,\delta\to0$ in both of the two limits in \eqref{lim}. We mention here that, in the more general case where the initial data is the sum of a 2D part that belongs to the kernel of $\mathcal{L}$ and a 3D part with finite energy in $\mathbb{R}^3$, the limits are no more zero and differ for the two limits in \eqref{lim}. However, the expected convergences in this case are not easy to be rigorously justified at least for the studies in 3D whole space.
\end{remark}

\begin{remark}
We compare the conditions and results of Theorem \ref{th-main} with those in \cite{Ngo-DCDS}. Indeed, for $\varepsilon=\delta$ small enough, $s$, $s_0$, $p$, $\beta$, $\kappa$ such that
\begin{equation*}
s>\frac{5}{2},~~s_0>0,~~1<p<2,~~\beta>0,~~\kappa>0,~~\beta(5+2\kappa)<\frac{1}{2},
\end{equation*}
and the initial data $U_{0}^{\varepsilon}$ satisfying
\begin{equation*}
\|U_0^{\varepsilon}\|_{s,s_0,p}:=\max\Big\{\|U_0^{\varepsilon}\|_{H^{s+s_0}(\mathbb{R}^3)},\|U_0^{\varepsilon}\|_{L^{2} (\mathbb{R}^2_{x_{h}};L^{p}(\mathbb{R}_{x_{3}}))},\|U_0^{\varepsilon}\|_{L^{p} (\mathbb{R}^2_{x_{h}};L^{2}(\mathbb{R}_{x_{3}}))}\Big\}<\infty,
\end{equation*}
the authors in \cite{Ngo-DCDS} showed that the system \eqref{CEL} admits a unique strong solution $U^{\varepsilon}$ such that
\begin{equation}\label{10161}
U^{\varepsilon}\in C([0,T^{\star}_{\varepsilon}];H^{s}(\mathbb{R}^3))~~\mathrm{with} ~~T_{\varepsilon}^{\star}\geq\frac{C}{\mathcal{C}(U_0^{\varepsilon})\varepsilon^{\alpha}},
\end{equation}
where
\begin{equation}\label{10162}
\mathcal{C}(U_0^{\varepsilon})=\max\Big\{\|U_0^{\varepsilon}\|_{s,s_0,p},\|U_0^{\varepsilon}\|_{s,s_0,p}^2\Big\},~~\mathrm{and}~~\alpha=\min\Big\{\frac{1}{4}-\frac{\beta(5+2\kappa)}{2},\frac{\beta s_0}{2}\Big\}.
\end{equation}
Compare to $\alpha$ in \eqref{10161}, the exponent $1/7$ in \eqref{8125} of this paper is not affected by the index $s_0$. Indeed, in \cite{Ngo-DCDS} $s_0$ characterises the difference between the differentiability of the initial data and the solution, which in this paper equals to $0$. However, taking $s_0=0$ in \eqref{10162} leads to $\alpha=0$ and as a result the lower bound of $T^{\star}_{\varepsilon}$ is of order $O(1)$ as $\varepsilon$ tends to zero. Furthermore, in this paper, we do not require the initial data to belong to the anisotropy spaces $L^{2} (\mathbb{R}^2_{x_{h}};L^{p}(\mathbb{R}_{x_{3}}))$ and $L^{p} (\mathbb{R}^2_{x_{h}};L^{2}(\mathbb{R}_{x_{3}}))$.

\end{remark}

\begin{remark}
From \eqref{8125} we can see that, as $\|U_0\|_{H^{m}(\mathbb{R}^3)}\leq \varepsilon^{-\omega}$ with $0<\omega<\frac{1}{8}$, then
\begin{equation*}
T^*\geq C\bigg(\frac{1}{\varepsilon^{1-8\omega}}\bigg)^{\frac{1}{7}}\to\infty,~~\mathrm{as}~~\varepsilon\to0.
\end{equation*}
Thus, the initial data $U_0$ can be chosen even to blow up as $\varepsilon$ goes to zero.
\end{remark}

We end this section by appointing some notations that shall be used. First, for a vector $v=(v_1,v_2,v_3)$, we use $v_h=(v_1,v_2)$ to denote its first two components. For a function $f$, we use $\hat{f}$ to denote its Fourier transform, and use $\mathcal{F}^{-1}f$ to denote its inverse. We let $g(D)f(x):=\mathcal{F}^{-1}(g(\xi)\hat{f}(\xi))$ for $g=g(\xi)$, and use $C$ to denote the `irrelevant' constants which may take different values in each appearance.

\section{Proof of Theorem \ref{th-main}}\label{sec2}

We begin from studying the spectra of the large operator $\mathcal{L}$. Taking the Fourier transform of $\mathcal{L}$ yields
\begin{equation*}
\widehat{\mathcal{L}}(\xi)=\left(\begin{matrix}
0&i\xi_{1}&i\xi_{2}&i\xi_3\\
i\xi_{1}&0&-\nu&0\\
i\xi_{2}&\nu&0&0\\
i\xi_3&0&0&0\\
\end{matrix}\right).
\end{equation*}
Solving the eigenvalue problems
\begin{equation}\label{7251}
\widehat{\mathcal{L}}(\xi)\mathbf{r}_{j}(\xi)=i p_{j}(\xi)\mathbf{r}_{j}(\xi),~~j=1,2,3,4,
\end{equation}
we obtain $p_1(\xi)$, $p_2(\xi)$, $p_3(\xi)$ and $p_4(\xi)$ as the following four functions:
\begin{equation}\label{eq10101}
\begin{split}
\pm\frac{1}{2}\Big(\sqrt{|\xi|^2+\nu^2+2\nu\xi_3}\pm\sqrt{|\xi|^2+\nu^2-2\nu\xi_3}\Big),\\
\end{split}
\end{equation}
where $\xi=(\xi_{1},\xi_{2},\xi_3)$ and $|\xi|=\sqrt{\xi_{1}^{2}+\xi_{2}^{2}+\xi_3^2}$. The expressions of the eigenvectors $\mathbf{r}_j(\xi)$ are not important for our use. However, we can see that for $\mathrm{a.e}~\forall\,\xi\in\mathbb{R}^3$, $\{\mathbf{r}_{j}(\xi)\}_{j=1}^{4}$ is an orthonormal basis in $\mathbb{C}^{4}$. Now we define the operators
\begin{equation}\label{274}
\mathcal{P}_{(j)}f(x)=\mathcal{F}^{-1}\big((\widehat{f}\cdot \overline{\mathbf{r}_{j}})\mathbf{r}_{j}\big)(x),~~j=1,2,3,4,
\end{equation}
then for $\forall f\in (L^2(\mathbb{R}^3))^4$ we know that $f=\sum\nolimits_{j=1}^4\mathcal{P}_{(j)}f$. Applying $\mathcal{P}_{(j)}$ to \eqref{CEL}, and using Duhamel's principle yields
\begin{equation}\label{2310}
\mathcal{P}_{(j)}U(t)=e^{-i\frac{\bar{\gamma}t}{\delta}p_{j}(D)}\mathcal{P}_{(j)}U_{0}-\int_{0}^{t}{e^{-i\bar{\gamma}\frac{t-s}{\delta}p_{j}(D)}\mathcal{P}_{(j)}\mathcal{N}(U,U)(s)}ds,~~j=1,2,3,4.
\end{equation}


\subsection{Littlewood-Paley decomposition}
Let $\varphi\in \mathcal{S}(\mathbb{R}^{3})$ be a radial function, supported in the annular $\mathcal{C}:=\{\xi\in\mathbb{R}^{3}:\frac{3}{4}\leq |\xi|\leq \frac{8}{3}\}$ such that
\begin{equation*}
\sum\limits_{k\in\mathbb{Z}}\varphi_{k}(\xi)=1,~~\forall\,\xi\neq0,
\end{equation*}
where $\varphi_{k}(\xi)=\varphi(2^{-k}\xi)$. We define the operators
\begin{equation}\label{235}
\begin{split}
&\triangle_{k}f(x):=\varphi_{k}(D)f(x),~~k\in\mathbb{Z}.
\end{split}
\end{equation}
For $(r,\sigma)\in[1,+\infty]^{2}$, $m\in\mathbb{R}$ and $q\in[1,\infty]$, we define the norm of the standard homogeneous Besov spaces $\dot{B}^{m}_{r,\sigma}(\mathbb{R}^{3})$:
\begin{equation*}
\|f\|_{\dot{B}^{m}_{r,\sigma}}:=\bigg(\sum\limits_{k\in\mathbb{Z}}\big(2^{mk}\|\triangle_{k}f\|_{L^{r}}\big)^{\sigma}\bigg)^{\frac{1}{\sigma}},
\end{equation*}
and the space-time norm:
\begin{equation}\label{eq:besovspacetime}
\|f\|_{\widetilde{L^{q}}(0,T;\dot{B}^{m}_{r,\sigma})}:=\bigg(\sum\limits_{k\in\mathbb{Z}}\big(2^{mk}\|\triangle_{k}f\|_{L^{q}(0,T;L^{r})}\big)^{\sigma}\bigg)^{\frac{1}{\sigma}}.
\end{equation}
We have
\begin{equation}\label{892}
\|f\|_{\widetilde{L^{q}}(0,T;\dot{B}^{m}_{r,\sigma})}\leq \|f\|_{L^{q}(0,T;\dot{B}^{m}_{r,\sigma})}~\mathrm{if}~q\leq\sigma,~~ \|f\|_{L^{q}(0,T;\dot{B}^{m}_{r,\sigma})}\leq \|f\|_{\widetilde{L^{q}}(0,T;\dot{B}^{m}_{r,\sigma})}~\mathrm{if}~\sigma\leq q.
\end{equation}

\subsection{Study of the propagator}\label{sec2.2}
In this paper, the dispersive phenomenon involved in \eqref{CEL} is closely related to the linear operator defined as:
\begin{equation}\label{8111}
e^{i\frac{t}{\delta}p(D)}f(x):=\frac{1}{(2\pi)^{3}}\int_{\mathbb{R}^{3}}{e^{ix\cdot\xi+i\frac{t}{\delta}p(\xi)}\hat{f}(\xi)}d\xi,~~(t,x)\in\mathbb{R}^{1+3},
\end{equation}
where
\begin{equation}\label{191}
\begin{split}
p(\xi)=a\cdot\Big(\sqrt{|\xi|^2+\nu^2+2\nu\xi_3}\pm\sqrt{|\xi|^2+\nu^2-2\nu\xi_3}\Big),~~a=\mathrm{constant}.
\end{split}
\end{equation}
Noting that the propagators in \eqref{2310} can be written in terms of \eqref{8111}. By the notations in \eqref{235} we have
\begin{equation*}
e^{i\frac{t}{\delta}p(D)}f(x)=\sum\limits_{k\in\mathbb{Z}}e^{i\frac{t}{\delta}p(D)}\triangle_{k}f(x).
\end{equation*}
Now we let $\tilde{\varphi}\in\mathcal{S}(\mathbb{R}^{3})$ be a redial function such that
\begin{equation}\label{198}
0\leq\tilde{\varphi}\leq1,~~\mathrm{supp}\,\tilde{\varphi}\subset\{\xi\in\mathbb{R}^{3}:\frac{1}{2}\leq|\xi|\leq3\},~~\tilde{\varphi}\equiv1~~\mathrm{on}~~\mathcal{C}=\{\xi\in\mathbb{R}^{3}:\frac{3}{4}\leq |\xi|\leq \frac{8}{3}\},
\end{equation}
and define
\begin{equation*}
\begin{split}
&\tilde{\varphi}_{k}(\xi):=\tilde{\varphi}(2^{-k}\xi),~~k\in\mathbb{Z}.\\
\end{split}
\end{equation*}
Using the functions above, we introduce the operators
\begin{equation}\label{891}
\Lambda_{k}(t)f(x)=\frac{1}{(2\pi)^{3}}\int_{\mathbb{R}^{3}}{e^{ix\cdot\xi+i\frac{t}{\delta}p(\xi)}\tilde{\varphi}_{k}^{2}(\xi)\hat{f}(\xi)}d\xi,
\end{equation}
and
\begin{equation}\label{7237}
\tilde{\Lambda}_{k}(t)f(x)=\frac{1}{(2\pi)^{3}}\int_{\mathbb{R}^{3}}{e^{ix\cdot\xi+i\frac{t}{\delta}p(\xi)}\tilde{\varphi}_{k}(\xi)\hat{f}(\xi)}d\xi
\end{equation}
for $k\in\mathbb{Z}$. The operators $\Lambda_k(t)$ and $\tilde{\Lambda}_k(t)$ have nearly the same properties. In the following, we shall mainly focus on the operators $\Lambda_{k}(t)$. $\tilde{\Lambda}_{k}(t)$ are actually used to perform the TT* argument in Proposition \ref{a1}. It is clear for $k\in \mathbb{Z}$ that
\begin{equation}\label{1922}
e^{i\frac{t}{\delta}p(D)}\triangle_{k}f(x)=\Lambda_{k}(t)\triangle_{k}f(x)=\tilde{\Lambda}_{k}(t)\triangle_{k}f(x).
\end{equation}

\begin{proposition}\label{pro1}
Let $\Lambda_{k}(t)$, $\tilde{\Lambda}_{k}(t)$ and the phase function $p(\xi)$ be given by \eqref{891}, \eqref{7237} and \eqref{191}, respectively. Then for any $k\in\mathbb{Z}$ we have
\begin{equation*}
\|(\Lambda_k(t)f,\tilde{\Lambda}_{k}(t)f)\|_{L^{\infty}(\mathbb{R}^3)}\leq \frac{C2^{3k}}{\big(1+\frac{|t|}{\varepsilon\mathcal{M}_k}\big)^{1/2}}\|f\|_{L^1(\mathbb{R}^3)},
\end{equation*}
where
\begin{equation}\label{72317}
\mathcal{M}_k=\begin{cases}
1,~\mathrm{if}~k\geq \log_2{\frac{\nu}{60}};\\
\frac{\nu^3}{2^{3k}},~\mathrm{if}~k< \log_2{\frac{\nu}{60}},
\end{cases}
\end{equation}
and the constant $C$ is independent of $\varepsilon$, $\delta$, $f$, $k$ and $t$.
\end{proposition}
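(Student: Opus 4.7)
The plan is to obtain the stated dispersive bounds via pointwise estimates on the convolution kernels
\begin{equation*}
K_{k}(t,x)=\frac{1}{(2\pi)^{3}}\int_{\mathbb{R}^{3}} e^{i\bigl(x\cdot\xi+\frac{t}{\delta}p(\xi)\bigr)}\,\tilde{\varphi}_{k}^{2}(\xi)\,d\xi,
\end{equation*}
together with its analogue involving $\tilde{\varphi}_{k}$. Since $\Lambda_{k}(t)f=K_{k}(t,\cdot)*f$, Young's inequality reduces Proposition \ref{pro1} to the bound $\|K_{k}(t,\cdot)\|_{L^{\infty}}\leq C\,2^{3k}\bigl(1+|t|/(\varepsilon\mathcal{M}_{k})\bigr)^{-1/2}$. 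The trivial volume estimate $\|K_{k}\|_{L^{\infty}}\leq C\,2^{3k}$ disposes of the regime $|t|/\varepsilon\lesssim\mathcal{M}_{k}$, so the real task is to extract a $\bigl(|t|/(\varepsilon\mathcal{M}_{k})\bigr)^{-1/2}$ oscillatory decay in the complementary regime.

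The natural tool is van der Corput's lemma in a single variable: if a direction $e\in S^{2}$ can be chosen on which $|\partial_{e}^{2}p(\xi)|\gtrsim\rho_{k}$ uniformly on the relevant portion of support, integrating first along $e$ yields a gain $\bigl(|t/\delta|\rho_{k}\bigr)^{-1/2}$, while the remaining two directions contribute only the volume factor $2^{2k}$. The resulting bound $2^{3k}\bigl(|t/\delta|\rho_{k}\bigr)^{-1/2}$ matches the claim once the identity $\delta=\bar{\gamma}\varepsilon\nu$ is used to convert $|t/\delta|$ to $|t|/\varepsilon$ and $\rho_{k}$ is matched with $\mathcal{M}_{k}^{-1}$ up to constants. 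The obstruction is that $\det D^{2}p\propto\nu^{3}\xi_{3}|\xi_{h}|^{2}/[\tilde{A}^{4}\tilde{B}^{4}(\tilde{A}\mp\tilde{B})]$ vanishes on $\{\xi_{h}=0\}\cup\{\xi_{3}=0\}$, so such a direction cannot be selected globally. To get around this, I would introduce smooth cut-offs $\chi(\xi_{h}/\lambda_{h})$ and $\chi(\xi_{3}/\lambda_{3})$ with parameters to be optimized, split $K_{k}=K_{k}^{\mathrm{deg}}+K_{k}^{\mathrm{nd}}$, estimate $K_{k}^{\mathrm{deg}}$ by the trivial volume bound (losing only a factor proportional to $\lambda_{h}\lambda_{3}/2^{2k}$), and use van der Corput on $K_{k}^{\mathrm{nd}}$, then balance the two contributions.

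The principal technical difficulty is $\nu$-uniformity, which forces the three-regime analysis mentioned in the author's remark. For \emph{high frequencies} $2^{k}\gg\nu$ the two branches $p=a(\tilde{A}\pm\tilde{B})$ reduce respectively to the wave-type phase $2a|\xi|$ and to the anisotropic phase $2a\nu\xi_{3}/|\xi|$, so the good direction $e$ must be chosen differently for the two signs, and the corresponding van der Corput analyses are organized separately. For \emph{intermediate frequencies} $2^{k}\sim\nu$ no simplification of $\tilde{A},\tilde{B}$ is available, and a careful Taylor expansion of $\partial_{e}^{2}p$ on the non-degenerate piece of support is needed; this is where the constant $\log_{2}(\nu/60)$ appearing in the definition of $\mathcal{M}_{k}$ should arise. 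For \emph{low frequencies} $2^{k}\ll\nu$, after rescaling $\xi=2^{k}\eta$ the rescaled phase has amplitude of order $2^{k}/\nu$ and second derivatives of order $2^{2k}/\nu^{2}$, producing the weaker decay $\bigl(|t|\,2^{3k}/(\delta\nu^{2})\bigr)^{-1/2}=\bigl(|t|/(\varepsilon\cdot\nu^{3}/2^{3k})\bigr)^{-1/2}$, which is exactly the weight $\mathcal{M}_{k}=\nu^{3}/2^{3k}$ in this range.

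The hardest step will be verifying that the lower bounds $|\partial_{e}^{2}p|\gtrsim\rho_{k}$ used in van der Corput are genuinely uniform in $\nu$ once the cut-offs $\chi(\xi_{h}/\lambda_{h})$ and $\chi(\xi_{3}/\lambda_{3})$ are inserted, and carrying out the combinatorics that optimises $\lambda_{h},\lambda_{3}$ so that the degenerate and non-degenerate contributions balance with the claimed one-variable $1/\sqrt{|t|/(\varepsilon\mathcal{M}_{k})}$ decay uniformly across all three frequency regimes and both sign choices in $p$. The estimate for $\tilde{\Lambda}_{k}$ follows from the same analysis applied with $\tilde{\varphi}_{k}$ in place of $\tilde{\varphi}_{k}^{2}$, since both the cut-off localisation and the van der Corput arguments are insensitive to this replacement.
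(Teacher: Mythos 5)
The overall framework — reducing to an $L^\infty$ bound on the kernel by Young's inequality, using the trivial volume bound for small $|t|$, splitting into high/middle/low frequency regimes according to $\sigma_k=\nu/2^k$, and treating $q=A-B$ and $q=A+B$ separately — matches the paper's strategy. Your plan to gain $\big(|t|/(\varepsilon\mathcal M_k)\big)^{-1/2}$ from a second-derivative lower bound in one good direction is also in the right spirit. However, the central step of your proposal rests on a misconception that would send the argument in the wrong direction.

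You write that, because $\det D^2 p$ vanishes on $\{\xi_h=0\}\cup\{\xi_3=0\}$, ``such a direction cannot be selected globally,'' and you propose a degenerate/non-degenerate split with cut-offs $\chi(\xi_h/\lambda_h)$, $\chi(\xi_3/\lambda_3)$, estimating the degenerate piece by the trivial volume bound and balancing against van der Corput on the complement. This is incorrect: the vanishing of the determinant does not imply that the Hessian itself vanishes there, nor that every directional second derivative is small. In fact, for $q=A-B$ at $\xi_3=0$ (with $\sigma_k\leq 1/60$ and $|\xi_h|\gtrsim 1$) the diagonal entries $\partial_{\xi_i}^2 q$ all vanish, but the mixed entries $\partial_{\xi_1\xi_3}q=-2\sigma_k\xi_1/A^3$ and $\partial_{\xi_2\xi_3}q=-2\sigma_k\xi_2/A^3$ are of size $\sigma_k$, so for instance $e=(e_2+e_3)/\sqrt 2$ gives $\partial_e^2 q\sim\sigma_k$ uniformly on the whole small-$|\xi_3|$ piece. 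The paper exploits exactly this: for each fixed cut-off region (cut at universal constants, not at parameters $\lambda_h,\lambda_3$ to be tuned) it identifies one mixed second derivative or $2\times 2$ sub-Hessian determinant that is \emph{uniformly} bounded below by the right power of $\sigma_k$, including on the degenerate set. No balancing is needed; if it were, the result would be strictly weaker than $|t|^{-1/2}$, since the trivial volume bound contributes no temporal decay. So the cut-off-and-optimize scheme is both unnecessary and insufficient to reproduce \eqref{72317}.

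A second omission concerns the middle-frequency treatment of the piece where $|\xi_3|\geq 1/29$ (the paper's $I_2$). When $\frac{1}{60}<\sigma_k\leq 60$ and $\xi_3$ can be $\pm\sigma_k$, the quantities $A=\sqrt{|\xi_h|^2+(\xi_3+\sigma_k)^2}$ or $B=\sqrt{|\xi_h|^2+(\xi_3-\sigma_k)^2}$ can vanish as $\xi_h\to 0$, so all derivatives of $q$ become singular and van der Corput breaks down. The paper handles this with a genuinely different argument: a non-stationary-phase operator built from the \emph{first} derivative $\partial_{\xi_2}q$ (which satisfies $|\partial_{\xi_2}q|\gtrsim|\xi_2|$), exploiting rotational invariance of the integral in $\xi_h$ and the two-dimensional $L^1$ integrability of $1/A$ and $1/B$. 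Your proposal does not address this singularity, and there is no direction $e$ on which $\partial_e^2 q$ has a clean lower bound uniformly as $\xi_h\to 0$ in this regime. Any complete proof along your lines needs a separate device here.

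Finally, a small point: $\log_2(\nu/60)$ in \eqref{72317} is simply the boundary between the regimes $\sigma_k\leq 60$ and $\sigma_k>60$; the constant $60$ is chosen so that algebraic estimates such as $\frac{1}{2}\sigma_k\leq A,B\leq 2\sigma_k$ hold on the (rescaled) support, not because a Taylor expansion of $\partial_e^2 p$ at intermediate frequencies forces it.
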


The proofs of Proposition \ref{pro1} are presented in Section \ref{sec927}. Now we derive the space-time estimates for the operator $e^{i\frac{t}{\delta}p(D)}$ through Proposition \ref{pro1}. We denote
\begin{equation*}
\|f\|_{L^{q}_{t}L^{r}_{x}}:=\|f\|_{L^{q}(\mathbb{R};L^{r}(\mathbb{R}^{3}))}.
\end{equation*}

\begin{proposition}\label{p2}
Let $\Lambda_{k}(t)$, $\tilde{\Lambda}_{k}(t)$ and the phase function $p(\xi)$ be given by \eqref{891}, \eqref{7237} and \eqref{191}, respectively. Let $r$, $\tilde{r}$, $q$, $\tilde{q}$ satisfy
\begin{equation}\label{8622}
2\leq r,\,\tilde{r}\leq+\infty,~~4\leq q \leq+\infty,~~\frac{2}{q}\leq\frac{1}{2}\bigg(1-\frac{2}{r}\bigg),~~\mathrm{and}~~\frac{2}{\tilde{q}}=\frac{1}{2}\bigg(1-\frac{2}{\tilde{r}}\bigg).
\end{equation}
Then there exist positive constants $C$ such that for any $\varepsilon>0$, $\delta>0$, $f\in L^{2}(\mathbb{R}^{3})$, $F\in L^{\tilde{q}'}(\mathbb{R};L^{\tilde{r}'}(\mathbb{R}^{3}))$ and $k\in\mathbb{Z}$,
\begin{equation}\label{1910}
\|\Lambda_{k}(t)f\|_{L^{q}_{t}L^{r}_{x}}\leq C2^{3k(\frac{1}{2}-\frac{1}{r})}(\mathcal{M}_k\varepsilon)^{\frac{1}{q}}\|f\|_{L^{2}(\mathbb{R}^{3})},
\end{equation}
\begin{equation}\label{1911}
\bigg\|\int^{t}_{-\infty}{\Lambda_{k}(t-s) F(s)}ds\bigg\|_{L^{q}_{t}L^{r}_{x}}\leq C2^{3k(\frac{1}{2}+\frac{2}{\tilde{q}}-\frac{1}{r})}(\mathcal{M}_k\varepsilon)^{\frac{1}{q}+\frac{1}{\tilde{q}}}\| F\|_{L^{\tilde{q}'}_{t}L^{\tilde{r}'}_{x}},
\end{equation}
where $1/\tilde{r}+1/\tilde{r}'=1$, $1/\tilde{q}+1/\tilde{q}'=1$, $\mathcal{M}_k$ is defined by \eqref{72317}, and the constants $C$ are independent of $\varepsilon$, $\delta$, $f$, $F$ and $k$.
\end{proposition}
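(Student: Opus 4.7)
The plan is to derive both estimates from the pointwise dispersive bound of Proposition \ref{pro1} via a standard $TT^*$ scheme. First I interpolate: since the symbol of $\Lambda_k(t)$ is bounded by $1$, Plancherel yields the conservation law $\|\Lambda_k(t)f\|_{L^2_x}\leq\|f\|_{L^2}$ (and likewise for $\tilde\Lambda_k$). Combining this with Proposition \ref{pro1} via Riesz--Thorin gives, for every $r\in[2,\infty]$,
\begin{equation*}
\|\Lambda_k(t)f\|_{L^r_x}\leq C\,2^{3k(1-\frac{2}{r})}\Bigl(1+\tfrac{|t|}{\varepsilon\mathcal{M}_k}\Bigr)^{-\frac{1}{2}(1-\frac{2}{r})}\|f\|_{L^{r'}_x},
\end{equation*}
and the same bound for $\tilde\Lambda_k$.

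For the homogeneous estimate \eqref{1910} I use $TT^*$. Directly from \eqref{891} and \eqref{7237} one checks the composition rule $\tilde\Lambda_k(t)\tilde\Lambda_k(-s)=\Lambda_k(t-s)$; since $p(\xi)$ and $\tilde\varphi_k$ are real, $\tilde\Lambda_k(t)^*=\tilde\Lambda_k(-t)$, and setting $\tilde T_kf(t):=\tilde\Lambda_k(t)f$ one finds $\tilde T_k\tilde T_k^*F(t)=\int_{\mathbb{R}}\Lambda_k(t-s)F(s)\,ds$. Applying Minkowski in $x$ together with the dispersive bound above, followed by a convolution estimate in $t$---Young's inequality when $4/q+2/r<1$ (the kernel $(1+|\cdot|/A)^{-(1-2/r)/2}$ has $L^{q/2}(\mathbb{R})$-norm comparable to $A^{2/q}$ with $A=\varepsilon\mathcal{M}_k$), and one-dimensional Hardy--Littlewood--Sobolev at the endpoint $(q,r)=(4,\infty)$ via the comparison $(1+|t|/A)^{-1/2}\leq A^{1/2}|t|^{-1/2}$---yields
\begin{equation*}
\bigl\|\tilde T_k\tilde T_k^*F\bigr\|_{L^q_tL^r_x}\leq C\,2^{3k(1-\frac{2}{r})}(\varepsilon\mathcal{M}_k)^{2/q}\|F\|_{L^{q'}_tL^{r'}_x}.
\end{equation*}
The abstract $TT^*$ equivalence (Proposition \ref{a1} of Section \ref{sec3}) then gives \eqref{1910} for $\tilde\Lambda_k$ with exactly half the above exponents, i.e.\ $2^{3k(1/2-1/r)}(\varepsilon\mathcal{M}_k)^{1/q}$. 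The identical bound for $\Lambda_k$ follows by writing $\Lambda_k(t)f=\tilde\Lambda_k(t)\tilde\varphi_k(D)f$ and using $\|\tilde\varphi_k(D)f\|_{L^2}\leq\|f\|_{L^2}$.

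For the inhomogeneous estimate \eqref{1911}, I first prove the version with integration over all of $\mathbb{R}$ by factoring $\int_{\mathbb{R}}\Lambda_k(t-s)F(s)\,ds=\tilde T_k(\tilde T_k^*F)$: the outer $\tilde T_k$ maps $L^2\to L^q_tL^r_x$ by \eqref{1910} for $(q,r)$, while $\tilde T_k^*:L^{\tilde q'}_tL^{\tilde r'}_x\to L^2$ is the dual of \eqref{1910} applied to the sharp admissible pair $(\tilde q,\tilde r)$. Multiplying the two operator norms and using $1/\tilde r=1/2-2/\tilde q$ produces precisely the exponent $3k(1/2+2/\tilde q-1/r)$ appearing in \eqref{1911}. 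To replace $\int_{\mathbb{R}}$ by the retarded $\int_{-\infty}^t$, I invoke the Christ--Kiselev lemma, whose hypothesis $\tilde q'<q$ is satisfied because $q,\tilde q\geq 4$ forces $1/\tilde q+1/q\leq 1/2<1$.

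The only delicate point is the endpoint pair $(q,r)=(4,\infty)$ of the homogeneous estimate: there one has $\beta q/2=1$, so the $L^{q/2}$ integrability needed for Young's inequality fails, and one must use one-dimensional Hardy--Littlewood--Sobolev with the singular kernel $|t|^{-1/2}$ instead, producing the exponent $A^{1/2}=(\varepsilon\mathcal{M}_k)^{2/q}|_{q=4}$ that matches the general admissible range. This is also the step that preserves the uniform-in-$\nu$ character of Proposition \ref{pro1} throughout \eqref{1910}--\eqref{1911}.
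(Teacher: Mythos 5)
Your proof is correct and lives in the same $TT^*$ framework as the paper, but it is not the paper's route: the paper disposes of Proposition~\ref{p2} in two sentences by plugging $V(t)=\Lambda_k(t)$, $\mu_1=2^{3k}$, $\mu_2=(\varepsilon\mathcal{M}_k)^{-1}$, $\sigma=1/2$, $\psi=\tilde\varphi_k$, $d=3$ into the pre\-packaged Proposition~\ref{a1}, whose own proof handles the retarded integral $\int_{-\infty}^t$ by a Keel--Tao style argument---one proves the retarded bilinear estimate directly, plus the $L^\infty_t L^2_x$ and $L^\infty_t L^r_x$ retarded endpoints, and interpolates. You instead prove the non-retarded version $\int_{\mathbb{R}}\Lambda_k(t-s)F(s)\,ds=\tilde T_k\tilde T_k^*F$ by composing the homogeneous Strichartz bound with its dual, then pass to $\int_{-\infty}^t$ via the Christ--Kiselev lemma. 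Since $\sigma=1/2<1$ the endpoint pair $P_\sigma$ does not exist, so Christ--Kiselev costs nothing here, and the hypothesis $\tilde q'<q$ is comfortably met ($\tilde q'\le 4/3<4\le q$). Your exponent bookkeeping checks out: $2^{3k(1/2-1/r)}\cdot 2^{3k(1/2-1/\tilde r)}=2^{3k(1/2+2/\tilde q-1/r)}$ using sharpness $1/2-1/\tilde r=2/\tilde q$. Two small imprecisions worth flagging: (i) Proposition~\ref{a1} is not merely ``the abstract $TT^*$ equivalence''---it is the full dispersive-to-Strichartz proposition, so when you invoke it to ``give \eqref{1910}'' you are in effect citing the paper's entire argument rather than the generic $T^*T$ duality you actually use; and (ii) the Hardy--Littlewood--Sobolev step is needed for every sharp pair $2/q=\tfrac12(1-2/r)$, not only at $(q,r)=(4,\infty)$---that pair is just the extreme point of the sharp line. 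The trade-off between the two routes: the paper's Proposition~\ref{a1} tracks the constants $\mu_1,\mu_2,\psi$ explicitly so that it can be reused (it is, in the Examples of Section~\ref{sec3}), whereas your Christ--Kiselev route is shorter and more modular for this single application but imports an external lemma the paper deliberately avoids.
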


\begin{proof}
Estimates \eqref{1910} and \eqref{1911} can be proved by combining Proposition \ref{pro1} and Proposition \ref{a1} with
\begin{equation*}
V(t)=\Lambda_k(t),~\mu_1=2^{3k},~\mu_2=(\varepsilon\mathcal{M}_k)^{-1},~\sigma=\frac{1}{2},~\psi=\tilde{\varphi}_k,~\mathrm{and}~d=3.
\end{equation*}
Noting that $\|\mathcal{F}^{-1}(\tilde{\varphi}^2_k)\|_{L^1(\mathbb{R}^3)}\leq C$ for some constant $C$ independent of $k$, we obtain \eqref{1910} by \eqref{862} in Proposition \ref{a1}. On the other hand, since
\begin{equation*}
\|\mathcal{F}^{-1}(\tilde{\varphi}^2_k)\|_{L^{(\frac{1}{2}+\frac{1}{r})^{-1}}(\mathbb{R}^3)}\leq C2^{3k(\frac{1}{2}-\frac{1}{r})}\|\mathcal{F}^{-1}(\tilde{\varphi}^2)\|_{L^{(\frac{1}{2}+\frac{1}{r})^{-1}}(\mathbb{R}^3)}\leq C2^{3k(\frac{1}{2}-\frac{1}{r})},
\end{equation*}
we can also justify \eqref{1911} by using \eqref{865} in Proposition \ref{a1}.
\end{proof}

By using \eqref{892}, \eqref{1922}, the imbedding $\dot{B}^{0}_{\infty,1}\hookrightarrow L^{\infty}$, and Proposition \ref{p2} with $q\in[4,\infty)$, $r=\infty$, $\tilde{q}=\infty$ and $\tilde{r}=2$ we get, for $\forall\,l\in\mathbb{N}$,
\begin{equation}\label{8131}
\begin{split}
&\|\nabla^l e^{i\frac{t}{\delta}p(D)}f\|_{L^q_t L^{\infty}_x}\leq C\|e^{i\frac{t}{\delta}p(D)}\nabla^l f\|_{L^{q}(\mathbb{R};\dot{B}^{0}_{\infty,1})}\leq C \sum\limits_{k\in\mathbb{Z}}\|\Lambda_k(t)\triangle_{k}\nabla^l f\|_{L^{q}_t L^{\infty}_{x}}\\
&~~~~\leq C\varepsilon^{\frac{1}{q}}\bigg(\nu^{\frac{3}{q}}\sum\limits_{k<\log_2{\frac{\nu}{60}}}2^{(\frac{3}{2}-\frac{3}{q})k}\|\triangle_{k}\nabla^lf\|_{L^{2}}+\sum\limits_{k\geq\log_2{\frac{\nu}{60}}}2^{\frac{3}{2}k}\|\triangle_{k}\nabla^lf\|_{L^{2}}\bigg)\\
&~~~~\leq C\varepsilon^{\frac{1}{q}}\Big(\|f\|_{\dot{B}_{2,1}^{\frac{3}{2}-\frac{3}{q}+l}}+\|f\|_{\dot{B}_{2,1}^{\frac{3}{2}+l}}\Big),
\end{split}
\end{equation}
and
\begin{equation}\label{8132}
\begin{split}
&\bigg\|\nabla^l\int_{-\infty}^{t}e^{ i\frac{t-s}{\delta}p(D)}F(s)ds\bigg\|_{L^q_t L^{\infty}_x}\leq C\bigg\|\int_{-\infty}^{t}e^{ i\frac{t-s}{\delta}p(D)}\nabla^l F(s)ds\bigg\|_{L^{q}(\mathbb{R};\dot{B}^{0}_{\infty,1})}\\
&~~~~\leq C\sum\limits_{k\in\mathbb{Z}}\bigg\|\int_{-\infty}^t{\Lambda_k(t-s)\triangle_{k}\nabla^lF(s)}ds\bigg\|_{L^{q}_t L^{\infty}_{x}}\\
&~~~~\leq C\varepsilon^{\frac{1}{q}}\bigg(\nu^{\frac{3}{q}}\sum\limits_{k<\log_2{\frac{\nu}{60}}}2^{(\frac{3}{2}-\frac{3}{q})k}\|\triangle_{k}\nabla^lF\|_{L_t^{1}L^{2}_x}+\sum\limits_{k\geq\log_2{\frac{\nu}{60}}}2^{\frac{3}{2}k}\|\triangle_{k}\nabla^lF\|_{L_t^{1}L^{2}_x}\bigg)\\
&~~~~\leq C\varepsilon^{\frac{1}{q}}\Big( \|F\|_{L^{1}(\mathbb{R};\dot{B}^{\frac{3}{2}-\frac{3}{q}+l}_{2,1})}+\|F\|_{L^{1}(\mathbb{R};\dot{B}^{\frac{3}{2}+l}_{2,1})}\Big)
\end{split}
\end{equation}
for any $q\in[4,\infty)$ and $l\in\mathbb{N}$. Thus, by the imbeddings $H^{2+l}\hookrightarrow B^{\frac{3}{2}+l}_{2,1}\hookrightarrow \dot{B}^{\frac{3}{2}+l}_{2,1}\cap\dot{B}^{\frac{3}{2}-\frac{3}{q}+l}_{2,1}$ we get
\begin{equation}\label{893}
\|\nabla^l e^{i\frac{t}{\delta}p(D)}f\|_{L^q_t L^{\infty}_x}\leq C\varepsilon^{\frac{1}{q}}\|f\|_{H^{2+l}(\mathbb{R}^3)},
\end{equation}
and
\begin{equation}\label{894}
\bigg\|\nabla^l\int_{-\infty}^{t}e^{ i\frac{t-s}{\delta}p(D)}F(s)ds\bigg\|_{L^q_t L^{\infty}_x} \leq C\varepsilon^{\frac{1}{q}}\|F\|_{L^1(\mathbb{R};H^{2+l}(\mathbb{R}^3))}.
\end{equation}

\subsection{Dispersion and longtime existence of the solution}\label{sec2.3}
Now we are ready to prove Theorem \ref{th-main}. From \eqref{2310}, \eqref{893} and \eqref{894} we have, for $l\in\{0,1\}$, $t\in(0,T^*]$ and $q\in[4,\infty)$,
\begin{equation}\label{8121}
\begin{split}
&\|\nabla^l U\|_{L^q(0,t;L^{\infty}(\mathbb{R}^3))}\\
&\leq C\sum\limits_{j=1}^4\bigg(\|\nabla^l e^{-i\frac{\bar{\gamma}\tau}{\delta}p_{j}(D)}\mathcal{P}_{(j)}U_{0}\|_{L^q(0,t;L^{\infty}(\mathbb{R}^3))}\\
&~~~~+\|\nabla^l \int_{0}^{\tau}{e^{-i\bar{\gamma}\frac{\tau-s}{\delta}p_{j}(D)}\mathcal{P}_{(j)}\mathcal{N}(U,U)(s)}ds\|_{L^q(0,t;L^{\infty}(\mathbb{R}^3))}\bigg)\\
&\leq C\varepsilon^{\frac{1}{q}}\big(\|U_0\|_{H^{3}(\mathbb{R}^3)}+\|\mathcal{N}(U,U)\|_{L^1(0,t;H^{3}(\mathbb{R}^3))}\big)\\
&\leq C\varepsilon^{\frac{1}{q}}\big(\|U_0\|_{H^{3}(\mathbb{R}^3)}+\|U\|^2_{L^2(0,t;H^{4}(\mathbb{R}^3))}\big)\\
&\leq C\varepsilon^{\frac{1}{q}}\big(\|U_0\|_{H^{3}(\mathbb{R}^3)}+t\|U\|^2_{L^{\infty}(0,t;H^{4}(\mathbb{R}^3))}\big),
\end{split}
\end{equation}
which implies \eqref{8124} by using \eqref{est} in Proposition \ref{pro-ex}. Then, combining standard estimates for symmetric hyperbolic system in \cite{M-book} and \eqref{8121} with $l=1$ and $q=4$ yield that, for any $m\geq4$ and $t\in(0,T^*]$,
\begin{equation*}
\begin{split}
&\|U\|_{L^{\infty}(0,t;H^{m}(\mathbb{R}^3))}^2\leq \|U_0\|_{H^{m}(\mathbb{R}^3)}^2+C\int^t_0{\|\nabla U\|_{L^{\infty}(\mathbb{R}^3)}\| U\|_{H^{m}(\mathbb{R}^3)}^2}ds\\
&~~~~\leq \|U_0\|_{H^{m}(\mathbb{R}^3)}^2+C_*t^{\frac{3}{4}}\varepsilon^{\frac{1}{4}}\big(\|U_0\|_{H^{m}(\mathbb{R}^3)}+t\|U\|^2_{L^{\infty}(0,t;H^{m}(\mathbb{R}^3))}\big)\| U\|_{L^{\infty}(0,t;H^{m}(\mathbb{R}^3))}^2
\end{split}
\end{equation*}
for some constant $C_*$ which is independent of $\varepsilon$ and $\delta$. We take
\begin{equation*}
\mathcal{T}:=\sup\limits_{t}\big\{t\in[0,T^*]: \|U\|_{L^{\infty}(0,t;H^{m}(\mathbb{R}^3))}\leq 2\|U_0\|_{H^{m}(\mathbb{R}^3)}\big\}.
\end{equation*}
Then by the continuity of the solution we have $\mathcal{T}>0$. Thus for $t\in(0,\mathcal{T}]$ we have
\begin{equation*}
\begin{split}
&\|U\|_{L^{\infty}(0,t;H^{m}(\mathbb{R}^3))}^2\leq \|U_0\|_{H^{m}(\mathbb{R}^3)}^2+C_*t^{\frac{3}{4}}\varepsilon^{\frac{1}{4}}\big(\|U_0\|_{H^{m}(\mathbb{R}^3)}+4t\|U_0\|_{H^{m}(\mathbb{R}^3)}^2\big)\| U\|_{L^{\infty}(0,t;H^{m}(\mathbb{R}^3))}^2.
\end{split}
\end{equation*}
For any $\varepsilon>0$, taking $T_0$ such that
\begin{equation}\label{8122}
C_*T_0^{\frac{3}{4}}\varepsilon^{\frac{1}{4}}\big(\|U_0\|_{H^{m}(\mathbb{R}^3)}+4T_0\|U_0\|_{H^{m}(\mathbb{R}^3)}^2\big)=\frac{1}{2},
\end{equation}
then
\begin{equation*}
\begin{split}
&\|U\|_{L^{\infty}(0,T_0;H^{m}(\mathbb{R}^3))}\leq \sqrt{2}\|U_0\|_{H^{m}(\mathbb{R}^3)}<2\|U_0\|_{H^{m}(\mathbb{R}^3)},
\end{split}
\end{equation*}
which implies $T^*\geq \mathcal{T}\geq T_0$ by the definition of $\mathcal{T}$. On the other hand, let $\varepsilon_0>0$ such that
\begin{equation*}
C_*\varepsilon_0^{1/4}(\|U_0\|_{H^{m}(\mathbb{R}^3)}+4\|U_0\|_{H^{m}(\mathbb{R}^3)}^2)=\frac{1}{4},
\end{equation*}
then when $0<\varepsilon\leq \varepsilon_0$ we get from \eqref{8122} that $T_0\geq1$. Thus
\begin{equation*}
\frac{1}{2}\leq C_*T_0^{\frac{3}{4}}\varepsilon^{\frac{1}{4}}\big(T_0\|U_0\|_{H^{m}(\mathbb{R}^3)}+4T_0\|U_0\|_{H^{m}(\mathbb{R}^3)}^2\big),
\end{equation*}
which implies that
\begin{equation*}
T_0\geq\bigg(\frac{1}{2C_*\big(\|U_0\|_{H^{m}(\mathbb{R}^3)}+4\|U_0\|^2_{H^{m}(\mathbb{R}^3)}\big)}\bigg)^{4/7}\frac{1}{\varepsilon^{1/7}}
\end{equation*}
as $\varepsilon\in(0, \varepsilon_0]$. Thus \eqref{8125} is proved. Now we have completed the proof of Theorem \ref{th-main}.

\section{Proof of Proposition \ref{pro1}}\label{sec927}
This section is devoted specifically to proving Proposition \ref{pro1}. As the dispersive decay estimates for the propagator are designed to be uniform to $\nu$ and cover all frequencies, the constructions of which are complicated. First, by using the convolution inequality and a stretching on the variable $\xi$, it suffices to show that
\begin{equation*}
\sup\limits_{x\in\mathbb{R}^{3}}\bigg|\int_{\mathbb{R}^{3}}{e^{ix\cdot\xi+ i\frac{t}{\delta}p(\xi)}\tilde{\varphi}_{k}(\xi)}d\xi\bigg|=\sup\limits_{x\in\mathbb{R}^{3}}\bigg|2^{3k}\int_{\mathbb{R}^{3}}{e^{ix\cdot2^k\xi+ ia\cdot2^k\frac{t}{\delta}q(\xi)}\tilde{\varphi}(\xi)}d\xi\bigg|\leq \frac{C2^{3k}}{\big(1+\frac{|t|}{\varepsilon\mathcal{M}_k}\big)^{1/2}},
\end{equation*}
where
\begin{equation}\label{7303}
\begin{split}
&q=A\pm B,~~\mathrm{with}\\
&A=\sqrt{|\xi|^2+\sigma_k^2+2\sigma_k\xi_3}=\sqrt{|\xi_h|^2+(\xi_3+\sigma_k)^2},\\
&B=\sqrt{|\xi|^2+\sigma_k^2-2\sigma_k\xi_3}=\sqrt{|\xi_h|^2+(\xi_3-\sigma_k)^2},~~\mathrm{and}~~\sigma_k=\frac{\nu}{2^k}.\\
\end{split}
\end{equation}
As the $L^{\infty}$ norm is invariant under dilation, it suffices to prove that
\begin{equation}\label{7231}
\sup\limits_{x\in\mathbb{R}^{3}}|I(x)|\leq \frac{C2^{3k}}{\big(1+\frac{|t|}{\varepsilon\mathcal{M}_k}\big)^{1/2}},
\end{equation}
where
\begin{equation}\label{7302}
I(x)=2^{3k}\int_{\mathbb{R}^{3}}{e^{i\theta_k(x\cdot\xi+q(\xi))}\tilde{\varphi}(\xi)}d\xi,~~~\theta_k=a\cdot2^k\frac{t}{\delta}.
\end{equation}
We break the integral $I(x)$ into two parts. Let $\tilde{\psi}_{j}\in C_{0}^{\infty}(\mathbb{R}^3)$, $j=1,2$, such that
\begin{equation*}
\begin{split}
&\mathrm{supp}\,\tilde{\psi}_1\subset\Big\{|\xi|\leq 4,~|\xi_3|\leq \frac{1}{28}\Big\},~~\tilde{\psi}_1=1~~\mathrm{on}~~\Big\{|\xi_3|\leq \frac{1}{29}\Big\}\cap\mathrm{supp}\,\tilde{\varphi},\\
&\mathrm{supp}\,\tilde{\psi}_2\subset\Big\{|\xi|\leq4,~|\xi_3|\geq \frac{1}{29}\Big\},~~\tilde{\psi}_2=1~~\mathrm{on}~~\Big\{|\xi_3|\geq \frac{1}{28}\Big\}\cap\mathrm{supp}\,\tilde{\varphi},\\
&\tilde{\psi}_j\geq 0,~~|\nabla_\xi\tilde{\psi}_j|\leq C,~~\sum\nolimits_{j=1}^{2}\tilde{\psi}_j(\xi)=1~\mathrm{on~supp}\,\tilde{\varphi},~\mathrm{and}~\tilde{\psi}_2~\mathrm{is~a~radial~function~of}~\xi_h.\\
\end{split}
\end{equation*}
Thus, by denoting
\begin{equation*}
\psi_j:=\tilde{\varphi}\tilde{\psi}_j,~~j=1,2,~~\mathrm{and}~~\phi(\xi)=\phi(\xi_1,\xi_2,\xi_3):=x\cdot\xi+q(\xi),
\end{equation*}
we can write
\begin{equation*}
I(x)=I_1(x)+I_2(x),~~\mathrm{where}~~I_j(x)=2^{3k}\int_{\mathbb{R}^{3}}{e^{i\theta_k\phi(\xi)}\psi_j(\xi)}d\xi.
\end{equation*}
The estimates for $I_{1}(x)$ and $I_2(x)$ are presented in Section \ref{sec3.1} and Section \ref{sec3.2}, respectively. The discussions in each subsections are divided into three parts:
\begin{equation*}
\begin{split}
&\mathrm{high~frequencies}:~~k\geq\log_{2}{60\nu},~~\mathrm{or}~~\sigma_k\leq\frac{1}{60};\\
&\mathrm{middle~frequencies}:~~\log_{2}{\frac{\nu}{60}}\leq k< \log_{2}{60\nu},~~\mathrm{or}~~\frac{1}{60}<\sigma_k\leq 60;\\
&\mathrm{low~frequencies}:~~k<\log_{2}{\frac{\nu}{60}},~~\mathrm{or}~~\sigma_k>60.\\
\end{split}
\end{equation*}
For $q=A\pm B$ we can calculate that
\begin{equation}\label{7238}
\begin{split}
&\partial_{\xi_1}q=\Big(\frac{1}{A}\pm\frac{1}{B}\Big)\xi_1,~~~\partial_{\xi_2}q=\Big(\frac{1}{A}\pm\frac{1}{B}\Big)\xi_2,~~~\partial_{\xi_3}q=\frac{\xi_3+\sigma_k}{A}\pm\frac{\xi_3-\sigma_k}{B},\\
&\partial_{\xi_1}^2 q=\Big(\frac{1}{A}\pm\frac{1}{B}\Big)-\Big(\frac{1}{A^3}\pm\frac{1}{B^3}\Big)\xi_1^2,~~~\partial_{\xi_2}^2 q=\Big(\frac{1}{A}\pm\frac{1}{B}\Big)-\Big(\frac{1}{A^3}\pm\frac{1}{B^3}\Big)\xi_2^2,\\
&\partial_{\xi_1\xi_2}q=-\Big(\frac{1}{A^3}\pm\frac{1}{B^3}\Big)\xi_1\xi_2,~~~\partial_{\xi_3}^2 q=\Big(\frac{1}{A}\pm\frac{1}{B}\Big)-\Big(\frac{(\xi_3+\sigma_k)^2}{A^3}\pm\frac{(\xi_3-\sigma_k)^2}{B^3}\Big),\\
&\partial_{\xi_1\xi_3}q=-\Big(\frac{\xi_3+\sigma_k}{A^3}\pm\frac{\xi_3-\sigma_k}{B^3}\Big)\xi_1,~~~\partial_{\xi_2\xi_3}q=-\Big(\frac{\xi_3+\sigma_k}{A^3}\pm\frac{\xi_3-\sigma_k}{B^3}\Big)\xi_2.
\end{split}
\end{equation}
These derivatives shall be frequently used.

\textbf{Notations.} In the following, for $\xi=(\xi_1,\xi_2,\xi_3)$ and $i,j=1,2,3$, we denote by:
\begin{equation*}
\xi_{i,j}:=(\xi_i,\xi_j),~~d\xi_{i,j}:=d\xi_id\xi_j,~~\nabla_{\xi_{i,j}}:=(\partial_{\xi_i},\partial_{\xi_j}),~~\mathrm{and}~~D_{\xi_{i,j}}^2 :=\left(\begin{matrix}
\partial_{\xi_i}^2 &\partial_{\xi_i\xi_j} \\
\partial_{\xi_i\xi_j} &\partial_{\xi_j}^2
\end{matrix}\right).
\end{equation*}

\subsection{Estimates for $I_1$}\label{sec3.1}
The integral $I_1$ should be further broken. We can see that
\begin{equation*}
\mathrm{supp}\,\psi_{1}\subset \Big\{\frac{1}{2}\leq|\xi|\leq 3,~|\xi_h|\geq \frac{\sqrt{3}}{4},~|\xi_3|\leq \frac{1}{28}\Big\}.
\end{equation*}
Let $\psi_{1,1}=\psi_{1,1}(\xi_1)\in C_{0}^{\infty}(\mathbb{R})$ such that
\begin{equation*}
\begin{split}
&0\leq\psi_{1,1}\leq 1,~~\mathrm{supp}\,\psi_{1,1}\subset\Big\{|\xi_1|\leq \frac{\sqrt{2}}{4}\Big\},~~\psi_{1,1}=1~~\mathrm{on}~~\Big\{|\xi_1|\leq\frac{1}{4}\sqrt{\frac{3}{2}}\Big\},~~\mathrm{and}~~|\partial_{\xi_1}\psi_{1,1}|\leq C,\\
\end{split}
\end{equation*}
then we can break $I_{1}(x)=I_{1,1}(x)+I_{1,2}(x)$ where
\begin{equation*}
I_{1,1}(x)=2^{3k}\int_{\mathbb{R}^{3}}{e^{i\theta_k\phi(\xi)}\psi_1(\xi)\psi_{1,1}(\xi_1)}d\xi,
\end{equation*}
and
\begin{equation*}
I_{1,2}(x)=2^{3k}\int_{\mathbb{R}^{3}}{e^{i\theta_k\phi(\xi)}\psi_1(\xi)(1-\psi_{1,1}(\xi_1))}d\xi.
\end{equation*}
In the following we pay our attentions to $I_{1,1}$, since the estimates for $I_{1,2}$ are similar. Define
\begin{equation*}
\Xi_{1}(\xi):=\psi_1(\xi)\psi_{1,1}(\xi_1),
\end{equation*}
then we have
\begin{equation*}
\begin{split}
&I_{1,1}(x)=2^{3k}\int_{\mathbb{R}^{3}}{e^{i\theta_k\phi(\xi)}\Xi_1(\xi)}d\xi,~~\mathrm{and}\\
&\mathrm{supp}\,\Xi_{1}\subset\Big\{\frac{1}{2}\leq|\xi|\leq 3,~|\xi_h|\geq \frac{\sqrt{3}}{4},~|\xi_1|\leq \frac{\sqrt{2}}{4},~|\xi_2|\geq \frac{1}{4},~|\xi_3|\leq \frac{1}{28}\Big\}.
\end{split}
\end{equation*}
By using a partition of unity to cover the support of the above $\Xi_{1}$, we may assume that the support of $\Xi_{1}$ is sufficiently small such that for any $\xi,\eta\in\mathrm{supp}\,\Xi_{1}$, the line segment connecting $\xi$ and $\eta$ lies wholly in
\begin{equation*}
\Omega_{1}:=\Big\{\frac{1}{4}\leq|\xi|\leq 4,~|\xi_h|\geq \frac{\sqrt{2}}{4},~|\xi_1|\leq \frac{\sqrt{3}}{4},~|\xi_2|\geq \frac{1}{8},~|\xi_3|\leq \frac{1}{26}\Big\}.
\end{equation*}

\subsubsection{Estimates for high frequencies}
In this subsection we construct the estimates for $I_{1,1}$ with high frequencies such that $k\geq\log_{2}{60\nu}$, which is equivalent to $\sigma_k\leq \frac{1}{60}$. The methods designed for $q=A-B$ and $q=A+B$ are different.

\textbf{The~case~of~$q=A-B$.} In this case the decay estimate is derived from the oscillation integral on $\mathbb{R}^2_{\xi_2,\xi_3}$. We let
\begin{equation*}
J_{1}=J_{1}(x,\xi_1,\xi_3):=\int{e^{i\theta_k\phi(\xi)}\Xi_{1}(\xi)}d\xi_{2},
\end{equation*}
then
\begin{equation}\label{e9112}
I_{1,1}=2^{3k}\iint{J_{1}}d\xi_{1,3}\leq C2^{3k}\Big(\iint{|J_{1}|^2}d\xi_{1,3}\Big)^{1/2},
\end{equation}
and
\begin{equation}\label{7236}
\begin{split}
|J_{1}|^2=J_{1}\bar{J}_{1}&=\iint{e^{i\theta_k(\phi(\xi_1,\eta_2,\xi_3)-\phi(\xi))}\Xi_{1}(\xi_1,\eta_2,\xi_3)\bar{\Xi}_{1}(\xi)}d\xi_{2}d\eta_{2}\\
&=\iint{e^{i\theta_k(\phi(\xi_1,\xi_2+\tau_2,\xi_3)-\phi(\xi))}\Upsilon_1(\xi,\tau_2)}d\xi_{2}d\tau_{2},
\end{split}
\end{equation}
where
\begin{equation*}
\Upsilon_1(\xi,\tau_2)=\Xi_{1}(\xi_1,\xi_2+\tau_2,\xi_3)\bar{\Xi}_{1}(\xi).
\end{equation*}
Noting that, in \eqref{7236} the variable $\tau_2=\eta_2-\xi_2\in[-8,8]$ since $\eta_2,\xi_2\in[-4,4]$. We introduce the operator $L_{1}$ by
\begin{equation*}
\begin{split}
L_{1} f(\xi)=\frac{1}{i\theta_k}g_{1}\cdot\partial_{\xi_3} f(\xi),~\mathrm{where}~g_1=\frac{1}{\partial_{\xi_3}\phi(\xi_1,\xi_2+\tau_2,\xi_3)-\partial_{\xi_3}\phi(\xi)}.
\end{split}
\end{equation*}
Then $L_{1}^{\top}f=-\frac{1}{i\theta_k}\partial_{\xi_3}(g_{1} f)$ and
\begin{equation}\label{7211}
\begin{split}
\iint{|J_{1}|^2}d\xi_{1,3}&=\iint{(L_1^N)\Big(e^{i\theta_k(\phi(\xi_1,\xi_2+\tau_2,\xi_3)-\phi(\xi))}\Big)\Upsilon_1(\xi,\tau_2)}d\xi_2d\tau_2d\xi_{1,3}\\
&=\iint{e^{i\theta_k(\phi(\xi_1,\xi_2+\tau_2,\xi_3)-\phi(\xi))}(L_1^{\top})^N\Upsilon_1(\xi,\tau_2)}d\xi_2d\tau_2d\xi_{1,3}\\
&\leq C\iint{|(L_1^{\top})^N\Upsilon_1(\xi,\tau_2)|}d\xi_2d\tau_2d\xi_{1,3}\\
\end{split}
\end{equation}
for any $N\in\mathbb{N}$. Taking $N=2$ in \eqref{7211} we get
\begin{equation}\label{8231}
\begin{split}
\iint{|J_{1}|^2}d\xi_{1,3}\leq C\iint{\frac{1}{|\theta_k|^2}\sum\limits_{j_1+j_2=0}^{2}|\partial_{\xi_3}^{j_1}g_{1}|\cdot|\partial_{\xi_3}^{j_2}g_{1}|}d\xi_2d\tau_2d\xi_{1,3}.
\end{split}
\end{equation}
By the differential mean value theorem we have, in the above integral,
\begin{equation}\label{921}
|g_{1}|\leq\frac{C}{\inf_{\xi\in\Omega_{1}}|\partial_{\xi_2\xi_3}q(\xi)|\cdot|\tau_{2}|},~~~|\partial_{\xi_3} g_{1}|\leq \frac{C\sup_{\xi\in\Omega_{1}}|\partial_{\xi_2}\partial_{\xi_3}^2q(\xi)|}{\inf_{\xi\in\Omega_{1}}|\partial_{\xi_2\xi_3}q(\xi)|^2\cdot|\tau_{2}|},
\end{equation}
and
\begin{equation}\label{922}
\begin{split}
|\partial_{\xi_3}^{2} g_{1}|&\leq\frac{C}{|\tau_{2}|}\bigg( \frac{\sup_{\xi\in\Omega_{1}}|\partial_{\xi_2}\partial_{\xi_3}^3q(\xi)|}{\inf_{\xi\in\Omega_{1}}|\partial_{\xi_2\xi_3}q(\xi)|^2}+\frac{\sup_{\xi\in\Omega_{1}}|\partial_{\xi_2}\partial_{\xi_3}^2q(\xi)|^{2}}{\inf_{\xi\in\Omega_{1}}|\partial_{\xi_2\xi_3}q(\xi)|^3}\bigg).
\end{split}
\end{equation}
Thus it suffices to estimate the upper bounds of $|\partial_{\xi_2}\partial_{\xi_3}^2q(\xi)|$, $|\partial_{\xi_2}\partial_{\xi_3}^3q(\xi)|$ and the lower bound of $|\partial_{\xi_2\xi_3}q(\xi)|$ on $\Omega_{1}$.

From \eqref{7238} we have, for $q=A-B$,
\begin{equation*}
\begin{split}
\partial_{\xi_2\xi_3}q=-\Big(\frac{\xi_3+\sigma_k}{A^3}-\frac{\xi_3-\sigma_k}{B^3}\Big)\xi_2=-\frac{\xi_2}{A^3B^3}\Big((A^3+B^3)\sigma_k-(A^3-B^3)\xi_3\Big).
\end{split}
\end{equation*}
Noting that, for $\xi\in\Omega_{1}$,
\begin{equation*}
(A^3+B^3)\sigma_k\geq 2|\xi_h|^3\sigma_k\geq \frac{\sqrt{2}}{16}\sigma_k,
\end{equation*}
and
\begin{equation*}
|(A^3-B^3)\xi_3|=\Big|(A^2-B^2)\frac{A^2+B^2+AB}{A+B}\xi_3\Big|\leq |4\sigma_k\xi_3^2(A+B)|\leq 40\xi^2_3\sigma_k\leq \frac{10}{169}\sigma_k
\end{equation*}
by observing that $A,B\leq 5$ as $\sigma_k\leq \frac{1}{60}$. Thus
\begin{equation}\label{e9131}
|\partial_{\xi_2\xi_3}q|\geq \frac{C|\xi_2|\sigma_k}{A^3B^3}\geq C\sigma_k.
\end{equation}
Furthermore, noting that $\partial_{\xi_2}\partial_{\xi_3}^2q(\xi)$ and $\partial_{\xi_2}\partial_{\xi_3}^3q(\xi)$ are the linear combinations of terms having the following forms:
\begin{equation*}
\Big(\frac{1}{A^n}-\frac{1}{B^n}\Big)\xi_2^{j}\xi_3^l,~~\Big(\frac{1}{A^n}-\frac{1}{B^n}\Big)\xi_2^{j}\xi_3^l\sigma_k^{m},~~\Big(\frac{1}{A^n}+\frac{1}{B^n}\Big)\xi_2^{j}\xi_3^l\sigma_k^m,
\end{equation*}
where $m,n,j,l$ are integers such that $m,n\geq1$ and $j,l\geq 0$. We can verify that
\begin{equation*}
\Big|\Big(\frac{1}{A^n}-\frac{1}{B^n}\Big)\xi_2^{j}\xi_3^l\Big|\leq C\Big|\frac{1}{A}-\frac{1}{B}\Big|\leq \frac{C\sigma_k}{AB(A+B)}\leq C\sigma_k,
\end{equation*}
and
\begin{equation*}
\Big|\Big(\frac{1}{A^n}\pm\frac{1}{B^n}\Big)\xi_2^{j}\xi_3^l\sigma_k^m\Big|\leq C\sigma_k^m\leq C\sigma_k.
\end{equation*}
Thus $|\partial_{\xi_2}\partial_{\xi_3}^2q(\xi)|,|\partial_{\xi_2}\partial_{\xi_3}^3q(\xi)|\leq C\sigma_k$. Introducing the above estimates into \eqref{921} and \eqref{922}, and plugging the results into \eqref{8231} yields
\begin{equation}\label{911}
\begin{split}
\iint{|J_{1}|^2}d\xi_{1,3}\leq C\iint{\Big(\frac{1}{\theta_k \sigma_k\tau_2}\Big)^2}d\xi_2d\tau_2d\xi_{1,3}\leq C\int_{-8}^{8}{\frac{1}{|\theta_k \sigma_k\tau_2|^2}}d\tau_2.
\end{split}
\end{equation}
On the other hand, taking $N=0$ in \eqref{7211} we get
\begin{equation}\label{e9111}
 \iint{|J_{1}|^2}d\xi_{1,3}\leq C\int_{-8}^{8}{1}d\tau_2.
\end{equation}
Thus, combining \eqref{911} and \eqref{e9111} we obtain
\begin{equation}\label{913}
\iint{|J_{1}|^2}d\xi_{1,3}\leq C\int_{-8}^{8}{\frac{1}{1+|\theta_k \sigma_k\tau_2|^2}}d\tau_2\leq \frac{C}{|\theta_k\sigma_k|}.
\end{equation}
Introducing \eqref{913} into \eqref{e9112} we finally obtain that
\begin{equation}\label{9116}
|I_{1,1}|\leq C2^{3k}\Big(\frac{1}{|\theta_k\sigma_k|}\Big)^{1/2}\leq C2^{3k}\Big(\frac{\varepsilon}{|t|}\Big)^{1/2}
\end{equation}
for $q=A-B$ and $\sigma_k\leq\frac{1}{60}$.

\begin{remark}
We explain why we only use the mixed derivative $\partial_{\xi_2\xi_3}q$ but not the Hessian matrix of $q$ on $\mathbb{R}^2_{\xi_2,\xi_3}$. Indeed, from \eqref{7238} we have
\begin{equation*}
\mathrm{det}\,D_{\xi_{2,3}}^2 q(\xi)=\Big(\frac{1}{A}-\frac{1}{B}\Big)^2\cdot\Big(\frac{1}{A^2}+\frac{1}{B^2}+\frac{1}{AB}\Big)\xi^2_1-\frac{4\sigma_k^2\xi_2^2}{A^3B^3}.
\end{equation*}
Thus, for $\xi\in\Omega_1$ we can check that $|\mathrm{det}\,D_{\xi_{2,3}}^2 q(\xi)|\geq C\sigma^2_k$, which is smaller than the lower bound of $|\partial_{\xi_2\xi_3}q|$ in \eqref{e9131} as $\sigma_k\ll1$. As a result, we can obtain that
\begin{equation*}
|I_{1,1}|\leq C2^{3k}\Big(\frac{1}{|\theta_k\sigma_k^n|}\Big)^{1/2}\leq C2^{3k}\Big(\frac{2^{(n-1)k}\varepsilon^n}{|t|\delta^{n-1}}\Big)^{1/2}~~\mathrm{for~some}~~n\in[2,\infty)\cap\mathbb{N},
\end{equation*}
which may go to infinity as $\varepsilon,\delta\to0$.
\end{remark}

\textbf{The~case~of~$q=A+B$.} In this case the decay estimate is derived from the oscillation integral on the plane $\mathbb{R}_{\xi_1,\xi_3}^2$. However, the proof can not be obtained by simply switching the roles of $\xi_1$ and $\xi_2$ in the above discussions as we need to use the Hessian matrix on $\mathbb{R}_{\xi_1,\xi_3}^2$ here. We let
\begin{equation*}
J_{2}=J_{2}(x,\xi_2):=\int{e^{i\theta_k\phi(\xi)}\Xi_{1}(\xi)}d\xi_{1,3},
\end{equation*}
then $I_{1,1}=2^{3k}\int{J_{2}}d\xi_2$ and similar to \eqref{7236},
\begin{equation}\label{918}
\begin{split}
|J_{2}|^2=J_{2}\bar{J}_{2}&=\iint{e^{i\theta_k(\phi(\xi_1+\tau_1,\xi_2,\xi_3+\tau_3)-\phi(\xi))}\Upsilon_2(\xi,\tau_1,\tau_3)}d\xi_{1,3}d\tau_{1,3},
\end{split}
\end{equation}
where
\begin{equation*}
\Upsilon_2(\xi,\tau_1,\tau_3)=\Xi(\xi_1+\tau_1,\xi_2,\xi_3+\tau_3)\bar{\Xi}(\xi).
\end{equation*}
We introduce the operator $L_{2}$ by
\begin{equation*}
L_{2} f(\xi)=\frac{1}{i\theta_k}g_{2}\cdot\nabla_{\xi_{1,3}} f(\xi),~~\mathrm{where}~~g_{2}=\frac{\nabla_{\xi_{1,3}}\phi(\xi_1+\tau_1,\xi_2,\xi_3+\tau_3)-\nabla_{\xi_{1,3}}\phi(\xi)}{|\nabla_{\xi_{1,3}}\phi(\xi_1+\tau_1,\xi_2,\xi_3+\tau_3)-\nabla_{\xi_{1,3}}\phi(\xi)|^{2}}.
\end{equation*}
Then $L_{2}^{\top}f=-\frac{1}{i\theta_k}\nabla_{\xi_{1,3}}\cdot(g_{2} f)$ and
\begin{equation}\label{917}
\begin{split}
|J_{2}|^2&=\iint{(L_2)^N\big(e^{i\theta_k(\phi(\xi_1+\tau_1,\xi_2,\xi_3+\tau_3)-\phi(\xi))}\big)\Upsilon_{2}(\xi,\tau_1,\tau_3)}d\xi_{1,3}d\tau_{1,3}\\
&=\iint{e^{i\theta_k(\phi(\xi_1+\tau_1,\xi_2,\xi_3+\tau_3)-\phi(\xi))}(L_{2}^{\top})^N\Upsilon_2(\xi,\tau_1,\tau_3)}d\xi_{1,3}d\tau_{1,3}
\end{split}
\end{equation}
for any $N\in\mathbb{N}$. Taking $N=2$ in the above identity we get, similar to \eqref{8231},
\begin{equation}\label{916}
\begin{split}
|J_2|^2\leq
\iint{\frac{C}{|\theta_k|^2}\sum\limits_{j_1+j_2=0}^{2}|\nabla^{j_1}_{\xi_{1,3}}g_{2}|\cdot|\nabla^{j_2}_{\xi_{1,3}}g_{2}|}d\xi_{1,3}d\tau_{1,3}.
\end{split}
\end{equation}
By the differential mean value theorem we have
\begin{equation}\label{e9121}
|g_{2}|\leq \frac{1}{|\nabla_{\xi_{1,3}} q(\xi_1+\tau_1,\xi_2,\xi_3+\tau_3)-\nabla_{\xi_{1,3}} q(\xi)|}\leq\frac{C}{\inf_{\xi\in\Omega_{1}}|\mathrm{det}\,D_{\xi_{1,3}}^{2}q(\xi)|\cdot|\tau_{1,3}|},
\end{equation}
\begin{equation}\label{e9122}
\begin{split}
|\nabla_{\xi_{1,3}} g_{2}|&\leq \frac{C|\nabla_{\xi_{1,3}}^{2}q(\xi_1+\tau_1,\xi_2,\xi_3+\tau_3)-\nabla_{\xi_{1,3}}^{2}q(\xi)|}{|\nabla_{\xi_{1,3}} q(\xi_1+\tau_1,\xi_2,\xi_3+\tau_3)-\nabla_{\xi_{1,3}} q(\xi)|^{2}}\leq \frac{C\sup_{\xi\in\Omega_{1}}|\nabla_{\xi_{1,3}}^{3}q(\xi)|}{\inf_{\xi\in\Omega_{1}}|\mathrm{det}\,D_{\xi_{1,3}}^{2}q(\xi)|^{2}\cdot|\tau_{1,3}|},
\end{split}
\end{equation}
and
\begin{equation}\label{e9123}
\begin{split}
|\nabla_{\xi_{1,3}}^{2} g_{2}|&\leq\frac{C}{|\tau_{1,3}|}\bigg( \frac{\sup_{\xi\in\Omega_{1}}|\nabla_{\xi_{1,3}}^{4}q(\xi)|}{\inf_{\xi\in\Omega_{1}}|\mathrm{det}\,D_{\xi_{1,3}}^{2}q(\xi)|^{2}}+\frac{\sup_{\xi\in\Omega_{1}}|\nabla_{\xi_{1,3}}^{3}q(\xi)|^{2}}{\inf_{\xi\in\Omega_{1}}|\mathrm{det}\,D_{\xi_{1,3}}^{2}q(\xi)|^{3}}\bigg).
\end{split}
\end{equation}
Thus it suffices to estimate the upper bounds of $|\nabla_{\xi_{1,3}}^{3}q|$, $|\nabla_{\xi_{1,3}}^{4}q|$ and the lower bound of $|\mathrm{det}\,D_{\xi_{1,3}}^{2}q|$ on $\Omega_{1}$. From \eqref{7238} we have, for $q=A+B$ and $\sigma_k\leq \frac{1}{60}$,
\begin{equation*}
\begin{split}
&\mathrm{det}\,D_{\xi_{1,3}}^2 q(\xi)=\Big(\frac{1}{A}+\frac{1}{B}\Big)\cdot\Big(\frac{1}{A^3}+\frac{1}{B^3}\Big)\xi^2_2+\frac{4\sigma_k^2\xi_1^2}{A^3B^3}\geq\Big(\frac{1}{A}+\frac{1}{B}\Big)\cdot\Big(\frac{1}{A^3}+\frac{1}{B^3}\Big)\xi^2_2\geq C,
\end{split}
\end{equation*}
and
\begin{equation*}
|\nabla_{\xi_{1,3}}^{3}q|,~|\nabla_{\xi_{1,3}}^{4}q|\leq C.
\end{equation*}
Introducing the above estimates into \eqref{e9121}, \eqref{e9122} and \eqref{e9123}, and plugging the results into \eqref{916} we get
\begin{equation*}
\begin{split}
|J_{2}|^2\leq \iint{\frac{C}{|\theta_k|^2|\tau_{1,3}|^2}}d\xi_{1,3}d\tau_{1,3}
&\leq C\iint{\frac{1}{|\theta_k|^2|\tau_{1,3}|^3}}d\tau_{1,3},
\end{split}
\end{equation*}
where we have used the fact $|\tau_{1,3}|\leq C$. Taking $N=0$ in \eqref{917} we can also get
\begin{equation*}
|J_2|^2\leq C\iint{1}d\tau_{1,3}.
\end{equation*}
Thus, combining the above two estimates we have
\begin{equation*}
|J_2|\leq \bigg(C\iint{\frac{1}{1+|\theta_k|^{2}|\tau_{1,3}|^3}}d\tau_{1,3}\bigg)^{1/2}\leq C\Big(\frac{1}{|\theta_k|}\Big)^{2/3}.
\end{equation*}
Plugging the above estimate into $I_{1,1}$ yields
\begin{equation}\label{9117}
|I_{1,1}|\leq C2^{3k}\Big(\frac{1}{|\theta_k|}\Big)^{2/3}.
\end{equation}
As $\sigma_k\leq \frac{1}{60}$, we finally obtain that
\begin{equation}\label{924}
|I_{1,1}|\leq C2^{3k}\Big(\frac{1}{|\theta_k|\sigma_k}\Big)^{2/3}\leq C2^{3k}\Big(\frac{\varepsilon}{|t|}\Big)^{2/3}\leq C2^{3k}\Big(\frac{\varepsilon}{|t|}\Big)^{1/2}
\end{equation}
for $q=A+B$.

Combining \eqref{9116} and \eqref{924} we conclude with
\begin{equation}\label{9118}
|I_{1,1}|\leq C2^{3k}\Big(\frac{\varepsilon}{|t|}\Big)^{1/2}
\end{equation}
for $q=A\pm B$ and $\sigma_k\leq\frac{1}{60}$.

\subsubsection{Estimates for middle frequencies}
The decay estimate of $I_{1,1}$ with $\frac{1}{60}< \sigma_k\leq60$ can be derived by switching the roles of $\xi_1$ and $\xi_2$ in the discussions for high frequencies between \eqref{918} and \eqref{924}. Thus, it suffices to estimate the upper bounds of $|\nabla_{\xi_{2,3}}^{3}q|$, $|\nabla_{\xi_{2,3}}^{4}q|$ and the lower bound of $|\mathrm{det}\,D_{\xi_{2,3}}^{2}q|$ on $\Omega_{1}$.
\begin{itemize}
\item For $q=A-B$ we can calculate from \eqref{7238} that
\begin{equation*}
\mathrm{det}\,D_{\xi_{2,3}}^2 q(\xi)=\Big(\frac{1}{A}-\frac{1}{B}\Big)^2\cdot\Big(\frac{1}{A^2}+\frac{1}{B^2}+\frac{1}{AB}\Big)\xi^2_1-\frac{4\sigma_k^2\xi_2^2}{A^3B^3}.
\end{equation*}
For $\xi\in\Omega_1$ and $\frac{1}{60}< \sigma_k\leq60$ we have
\begin{equation*}
\Big|\Big(\frac{1}{A}-\frac{1}{B}\Big)^2\cdot\Big(\frac{1}{A^2}+\frac{1}{B^2}+\frac{1}{AB}\Big)\xi^2_1\Big|\leq\frac{3(A^2-B^2)^2}{A^2B^2(A+B)^2}\leq\frac{3(4\sigma_k\xi_3)^2}{4A^3B^3}\leq\frac{3\sigma^2_k}{169A^3B^3},
\end{equation*}
and
\begin{equation*}
\frac{4\sigma_k^2\xi_2^2}{A^3B^3}\geq\frac{\sigma_k^2}{16A^3B^3}.
\end{equation*}
Thus
\begin{equation*}
\begin{split}
&|\mathrm{det}\,D_{\xi_{2,3}}^2 q|\geq\frac{C\sigma^2_k}{A^3B^3}\geq C.\\
\end{split}
\end{equation*}
\item For $q=A+B$ we have
\begin{equation*}
\begin{split}
&\mathrm{det}\,D_{\xi_{2,3}}^2 q(\xi)=\Big(\frac{1}{A}+\frac{1}{B}\Big)^2\cdot\Big(\frac{1}{A^2}+\frac{1}{B^2}-\frac{1}{AB}\Big)\xi^2_1+\frac{4\sigma_k^2\xi_2^2}{A^3B^3}>\frac{4\sigma_k^2\xi_2^2}{A^3B^3}\geq C.
\end{split}
\end{equation*}
\end{itemize}
Furthermore, since $A,B\geq |\xi_h|\geq C$ on $\Omega_{1}$, we can verify that $|\nabla_{\xi_{2,3}}^{3}q|,|\nabla_{\xi_{2,3}}^{4}q|\leq C$ for both $q=A-B$ and $q=A+B$. Thus, by switching the roles of $\xi_1$ and $\xi_2$ in the discussions \eqref{918}-\eqref{924} we can finally obtain
\begin{equation}\label{925}
|I_{1,1}|\leq C2^{3k}\Big(\frac{1}{|\theta_k|}\Big)^{2/3}\leq C2^{3k}\Big(\frac{1}{|\theta_k|\sigma_k}\Big)^{2/3}\leq C2^{3k}\Big(\frac{\varepsilon}{|t|}\Big)^{2/3}\leq C2^{3k}\Big(\frac{\varepsilon}{|t|}\Big)^{1/2}
\end{equation}
for $q=A\pm B$ and $\frac{1}{60}< \sigma_k\leq60$.

\subsubsection{Estimates for low frequencies}
In this subsection we construct the decay estimates for $I_{1,1}$ with low frequencies such that $k<\log_2 \frac{\nu}{60}$, which is equivalent to $\sigma_k> 60$.

\textbf{The~case~of~$q=A-B$.} In this case the decay estimate is derived from the oscillation integral on $\mathbb{R}^2_{\xi_2,\xi_3}$. The discussions are similar to those for the high frequencies in \eqref{e9112}-\eqref{9116}. We go on from \eqref{922}. Thus it suffices to estimate the upper bounds of $|\partial_{\xi_2}\partial_{\xi_3}^2q(\xi)|$, $|\partial_{\xi_2}\partial_{\xi_3}^3q(\xi)|$ and the lower bound of $|\partial_{\xi_2\xi_3}q(\xi)|$ for $q=A-B$ on $\Omega_{1}$. Noting that, as $\sigma_k>60$,
\begin{equation*}
|(A^3-B^3)\xi_3|\leq\frac{1}{2}\sigma_k (A^3+B^3),~~\mathrm{and}~~\frac{1}{2}\sigma_k\leq A,B\leq 2\sigma_k.
\end{equation*}
Thus
\begin{equation*}
\begin{split}
|\partial_{\xi_2\xi_3}q|&=\Big|\frac{\xi_2}{A^3B^3}\Big((A^3+B^3)\sigma_k-(A^3-B^3)\xi_3\Big)\Big|\geq \frac{C}{A^3B^3}(A^3+B^3)\sigma_k\geq\frac{C}{\sigma_k^2}.
\end{split}
\end{equation*}
Furthermore, we can verify that
\begin{equation*}
|\partial_{\xi_2}\partial_{\xi_3}^2q|\leq C\Big(\Big|\frac{1}{A^3}-\frac{1}{B^3}\Big|+\Big|\frac{(\xi_3+\sigma_k)^2}{A^5}-\frac{(\xi_3-\sigma_k)^2}{B^5}\Big|\Big)\leq C\Big(\frac{1}{A^3}+\frac{1}{B^3}\Big)\leq  \frac{C}{\sigma_k^3},
\end{equation*}
and
\begin{equation*}
\begin{split}
|\partial_{\xi_2}\partial_{\xi_3}^3q|&\leq C\Big(\Big|\frac{\xi_3+\sigma_k}{A^5}-\frac{\xi_3-\sigma_k}{B^5}\Big|+\Big|\frac{(\xi_3+\sigma_k)^3}{A^7}-\frac{(\xi_3-\sigma_k)^3}{B^7}\Big|\Big)\leq C\Big(\frac{1}{A^4}+\frac{1}{B^4}\Big)\leq \frac{C}{\sigma_k^4}.
\end{split}
\end{equation*}
Thus, plugging the above estimates into \eqref{921} and \eqref{922} we have
\begin{equation}\label{926}
|g_1|\leq\frac{C\sigma_k^2}{|\tau_2|},~~|\partial_{\xi_3}g_1|\leq\frac{C\sigma_k}{|\tau_2|},~~\mathrm{and}~~|\partial^2_{\xi_3}g_1|\leq\frac{C}{|\tau_2|}.
\end{equation}
Introducing \eqref{926} into \eqref{8231} we further have
\begin{equation}\label{923}
\begin{split}
\iint{|J_{1}|^2}d\xi_{1,3}\leq C\iint{\Big(\frac{\sigma_k^2}{\theta_k\tau_2}\Big)^{2}}d\xi_2d\tau_2d\xi_{1,3}\leq C\int^8_{-8}{\Big(\frac{\sigma_k^2}{\theta_k\tau_2}\Big)^{2}}d\tau_2.
\end{split}
\end{equation}
Thus, repeating the discussions in \eqref{e9111}, \eqref{913} and \eqref{9116} we can finally obtain that
\begin{equation}\label{928}
|I_{1,1}|\leq C2^{3k}\Big(\frac{\sigma_k^2}{|\theta_k|}\Big)^{1/2}
\end{equation}
for $q=A-B$ and $\sigma_k> 60$.

\textbf{The~case~of~$q=A+B$.} In this case the decay estimate is derived from the oscillation integral on $\mathbb{R}_{\xi_1}$. We let
\begin{equation*}
J_{3}=J_{3}(x,\xi_2,\xi_3):=\int{e^{i\theta_k\phi(\xi)}\Xi_{1}(\xi)}d\xi_{1},
\end{equation*}
then $I_{1,1}=2^{3k}\int{J_{3}}d\xi_2d\xi_3$ and
\begin{equation}\label{8302}
\begin{split}
|J_{3}|^2=J_{3}\bar{J}_{3}=\iint{e^{i\theta_k(\phi(\xi_1+\tau_1,\xi_2,\xi_3)-\phi(\xi))}\Upsilon_3(\xi,\tau_1)}d\xi_{1}d\tau_{1},
\end{split}
\end{equation}
where
\begin{equation*}
\Upsilon_3(\xi,\tau_1)=\Xi(\xi_1+\tau_1,\xi_2,\xi_3)\bar{\Xi}(\xi).
\end{equation*}
We introduce the operator $L_{3}$ by
\begin{equation*}
L_{3} f(\xi)=\frac{1}{i\theta_k}g_{3}\cdot\partial_{\xi_1} f(\xi),~~\mathrm{where}~~g_{3}=\frac{1}{\partial_{\xi_1}\phi(\xi_1+\tau_1,\xi_2,\xi_3)-\partial_{\xi_1}\phi(\xi)}.
\end{equation*}
Then $L_{3}^{\top}f=-\frac{1}{i\theta_k}\partial_{\xi_1}(g_3 f)$ and
\begin{equation}\label{e9126}
\begin{split}
|J_{3}|^2&=\iint{e^{i\theta_k(\phi(\xi_1+\tau_1,\xi_2,\xi_3)-\phi(\xi))}(L^{\top}_3)^2\Upsilon_3(\xi,\tau_1)}d\xi_{1}d\tau_{1}\\
&\leq \iint{\frac{C}{|\theta_k|^2}\sum\limits_{j_1+j_2=0}^{2}|\partial_{\xi_1}^{j_1}g_{3}|\cdot|\partial_{\xi_1}^{j_2}g_{3}|}d\xi_{1}d\tau_{1}.
\end{split}
\end{equation}
By the differential mean value theorem we have
\begin{equation}\label{e9124}
|g_{3}|\leq\frac{C}{\inf_{\xi\in\Omega_{1}}|\partial_{\xi_1}^2q(\xi)|\cdot|\tau_{1}|},~~~|\partial_{\xi_1} g_{3}|\leq \frac{C\sup_{\xi\in\Omega_{1}}|\partial_{\xi_1}^3q(\xi)|}{\inf_{\xi\in\Omega_{1}}|\partial_{\xi_1}^2q(\xi)|^2\cdot|\tau_{1}|},
\end{equation}
and
\begin{equation}\label{e9125}
\begin{split}
|\partial_{\xi_1}^{2} g_{3}|&\leq\frac{C}{|\tau_{1}|}\bigg( \frac{\sup_{\xi\in\Omega_{1}}|\partial_{\xi_1}^4q(\xi)|}{\inf_{\xi\in\Omega_{1}}|\partial^2_{\xi_1}q(\xi)|^2}+\frac{\sup_{\xi\in\Omega_{1}}|\partial_{\xi_1}^3q(\xi)|^{2}}{\inf_{\xi\in\Omega_{1}}|\partial_{\xi_1}^2q(\xi)|^3}\bigg).
\end{split}
\end{equation}
Thus it suffices to estimate the upper bounds of $|\partial_{\xi_1}^3q(\xi)|$, $|\partial_{\xi_1}^4q(\xi)|$ and the lower bound of $|\partial_{\xi_1}^2q(\xi)|$ on $\Omega_{1}$. Noting that $\frac{1}{2}\sigma_{k}\leq A,B\leq 2\sigma_k$ and $|\xi_1|\leq 4$, we have
\begin{equation*}
\begin{split}
|\partial_{\xi_1}^2q|&=\Big|\Big(\frac{1}{A}+\frac{1}{B}\Big)\cdot\Big(1-\Big(\frac{1}{A^2}+\frac{1}{B^2}-\frac{1}{AB}\Big)\xi_1^2\Big)\Big|\geq C\Big(\frac{1}{A}+\frac{1}{B}\Big)\cdot\Big(1-\frac{12\times16}{\sigma_k^2}\Big)\geq \frac{C}{\sigma_k}.
\end{split}
\end{equation*}
Furthermore, we can check that
\begin{equation*}
|\partial_{\xi_1}^3q|,~|\partial_{\xi_1}^4q|\leq C\Big(\Big|\frac{1}{A^3}+\frac{1}{B^3}\Big|+\Big|\frac{1}{A^5}+\frac{1}{B^5}\Big|+\Big|\frac{1}{A^7}+\frac{1}{B^7}\Big|\Big)\leq \frac{C}{\sigma_k^3}.
\end{equation*}
Thus, introducing the above estimates into \eqref{e9124} and \eqref{e9125}, and plugging the results into \eqref{e9126} we get
\begin{equation}\label{8301}
\begin{split}
|J_{3}|^2\leq C\iint{\Big(\frac{\sigma_k}{\theta_k\tau_1}\Big)^2}d\xi_{1}d\tau_{1}\leq C\int_{-8}^8{\Big(\frac{\sigma_k}{\theta_k\tau_1}\Big)^2}d\tau_{1}.
\end{split}
\end{equation}
Noting also that
\begin{equation*}
|J_3|^2\leq C\int^8_{-8}{1}d\tau_1.
\end{equation*}
Thus
\begin{equation*}
\begin{split}
|J_{3}|\leq \bigg(\int_{-8}^8{\frac{C}{1+|\frac{\theta_k\tau_1}{\sigma_k}|^2}}d\tau_{1}\bigg)^{1/2}\leq C\Big(\frac{\sigma_k}{|\theta_k|}\Big)^{1/2}.
\end{split}
\end{equation*}
Plugging the above estimate into $I_{1,1}$ we can finally obtain that
\begin{equation}\label{e9113}
|I_{1,1}|\leq C2^{3k}\Big(\frac{\sigma_k}{|\theta_k|}\Big)^{1/2}
\leq C2^{3k}\Big(\frac{\sigma_k^2}{|\theta_k|}\Big)^{1/2}
\end{equation}
for $q=A+B$ and $\sigma_k>60$.

Combining \eqref{928} and \eqref{e9113} we get
\begin{equation}\label{929}
|I_{1,1}|\leq C2^{3k}\Big(\frac{\sigma_k^2}{|\theta_k|}\Big)^{1/2}\leq C2^{3k}\Big(\frac{\nu^2\delta}{2^{3k}|t|}\Big)^{1/2}
\end{equation}
for $q=A\pm B$ and $\sigma_k> 60$.

Thus, combining \eqref{9118}, \eqref{925} and \eqref{929} we have that
\begin{equation*}
|I_{1,1}|\leq C2^{3k}\begin{cases}
\big(\frac{\varepsilon}{|t|}\big)^{1/2},~\mathrm{if}~\sigma_k\leq 60;\\
\big(\frac{\nu^2\delta}{2^{3k}|t|}\big)^{1/2},~\mathrm{if}~\sigma_k> 60.
\end{cases}
\end{equation*}
Finally, noting that on $\mathrm{supp}\,\psi_1(\xi)(1-\psi_{1,1}(\xi_1))$ we have $|\xi_1|\geq\frac{1}{4}\sqrt{\frac{3}{2}}$, the estimate for $I_{1,2}$ can be derived similarly by switching the roles of $\xi_1$ and $\xi_2$ in the above proof since $\xi_1$ and $\xi_2$ are symmetric in $q(\xi)$. Thus we conclude with
\begin{equation}\label{9211}
|I_{1}|\leq C2^{3k}\begin{cases}
\big(\frac{\varepsilon}{|t|}\big)^{1/2},~\mathrm{if}~\sigma_k\leq 60;\\
\big(\frac{\nu^2\delta}{2^{3k}|t|}\big)^{1/2},~\mathrm{if}~\sigma_k> 60
\end{cases}
\end{equation}
for $q=A\pm B$.

\subsection{Estimates for $I_2$}\label{sec3.2}
In this subsection we construct the estimates for $I_2(x)$. Noting that
\begin{equation*}
I_2(x)=2^{3k}\int_{\mathbb{R}^{3}}{e^{i\theta_k\phi(\xi)}\psi_2(\xi)}d\xi,
\end{equation*}
where
\begin{equation*}
\phi(\xi)=\phi(\xi_1,\xi_2,\xi_3):=x\cdot\xi+q(\xi),~~\mathrm{and}~~\mathrm{supp}\,\psi_{2}\subset \Big\{\frac{1}{2}\leq|\xi|\leq 3,~|\xi_3|\geq\frac{1}{29}\Big\}.
\end{equation*}

\subsubsection{Estimates for high frequencies}
To construct the estimates for high frequencies, we break $I_{2}$ into three parts.  Let $\psi_{2,j}\in C_{0}^{\infty}(\mathbb{R}^3)$, $j=1,2$, such that
\begin{equation*}
\begin{split}
&0\leq \psi_{2,j}\leq 1,~|\nabla_\xi\psi_{2,j}|\leq C,\\
&\mathrm{supp}\,\psi_{2,1}\subset\Big\{|\xi|\leq 4,~|\xi_1|\leq \frac{1}{185}\Big\},~\psi_{2,1}=1~\mathrm{on}~\Big\{|\xi_1|\leq \frac{1}{195}\Big\}\cap\mathrm{supp}\,\psi_2,\\
&\mathrm{supp}\,\psi_{2,2}\subset\Big\{|\xi|\leq4,~|\xi_1|\geq\frac{1}{195},~|\xi_2|\leq \frac{1}{185}\Big\},\\
&~~~~~~~\psi_{2,2}=1~\mathrm{on}~\Big\{|\xi_1|\geq\frac{1}{185},~|\xi_2|\leq \frac{1}{195}\Big\}\cap\mathrm{supp}\,\psi_2,\\
&\mathrm{and}~\sum\nolimits_{j=1}^{2}\psi_{2,j}(\xi)=1~\mathrm{on}~\Big(\Big\{|\xi_1|\leq\frac{1}{195}\Big\} \cup \Big\{|\xi_2|\leq\frac{1}{195}\Big\}\Big)\cap\mathrm{supp}\,\psi_2.
\end{split}
\end{equation*}
Denoting by
\begin{equation*}
\Xi_{2,1}:=\psi_2\psi_{2,1},~~\Xi_{2,2}:=\psi_2\psi_{2,2},~~\mathrm{and}~~\Xi_{2,3}:=\psi_2\cdot(1-\psi_{2,1}-\psi_{2,2}).
\end{equation*}
Then
\begin{equation}\label{9110}
I_2(x)=\sum\nolimits_{j=1}^3I_{2,j}(x),~~\mathrm{where}~~I_{2,j}(x)=2^{3k}\int_{\mathbb{R}^{3}}{e^{i\theta_k\phi(\xi)}\Xi_{2,j}(\xi)}d\xi.
\end{equation}
We estimate $I_{2,1}$, $I_{2,2}$ and $I_{2,3}$, respectively. Noting that $\sigma_k\leq \frac{1}{60}$ here.

\textbf{Estimate for $I_{2,1}$}. We have
\begin{equation*}
\mathrm{supp}\,\Xi_{2,1}\subset \Big\{\frac{1}{2}\leq|\xi|\leq 3,~|\xi_1|\leq\frac{1}{185},~|\xi_3|\geq\frac{1}{29}\Big\}.
\end{equation*}
By using a partition of unity to cover the support of the above $\Xi_{2,1}$, we may assume that the support of $\Xi_{2,1}$ is sufficiently small such that for any $\xi,\eta\in\mathrm{supp}\,\Xi_{2,1}$, the line segment connecting $\xi$ and $\eta$ lies wholly in
\begin{equation*}
\Omega_{2,1}:=\Big\{\frac{1}{4}\leq|\xi|\leq 4,~|\xi_1|\leq\frac{1}{180},~|\xi_3|\geq\frac{1}{30}\Big\}.
\end{equation*}
The decay estimate of $I_{2,1}$ is derived from the oscillation integral on $\mathbb{R}_{\xi_1}$. Thus the proofs are similar to those for $I_{1,1}$ with $q=A+B$ and low frequencies. In spirit of the discussions in \eqref{8302}-\eqref{e9113}, we are led to estimate the upper bounds of $|\partial_{\xi_1}^3q(\xi)|$, $|\partial_{\xi_1}^4q(\xi)|$ and the lower bound of $|\partial_{\xi_1}^2q(\xi)|$ on $\Omega_{2,1}$.
\begin{itemize}
\item For $q=A-B$ We have
\begin{equation*}
\begin{split}
|\partial_{\xi_1}^2q|&=\Big|\Big(\frac{1}{A}-\frac{1}{B}\Big)\cdot\Big(1-\Big(\frac{1}{A^2}+\frac{1}{B^2}+\frac{1}{AB}\Big)\xi_1^2\Big)\Big|\geq\Big|\frac{1}{A}-\frac{1}{B}\Big|\Big(1-3\times60^2\times\frac{1}{180^2}\Big)\\
&\geq C\frac{|A^2-B^2|}{AB(A+B)}\geq C\sigma_k
\end{split}
\end{equation*}
and
\begin{equation*}
|\partial_{\xi_1}^3q|,~|\partial_{\xi_1}^4q|\leq C\Big|\frac{A^2-B^2}{AB(A+B)}\Big|\leq C\sigma_k
\end{equation*}
by observing that $A,B\geq \min\{|\xi_3-\sigma_k|,|\xi_3+\sigma_k|\}\geq \frac{1}{60}$.
\item For $q=A+B$ we have
\begin{equation*}
\begin{split}
|\partial_{\xi_1}^2q|&=\Big|\Big(\frac{1}{A}+\frac{1}{B}\Big)\cdot\Big(1-\Big(\frac{1}{A^2}+\frac{1}{B^2}-\frac{1}{AB}\Big)\xi_1^2\Big)\Big|\geq C\Big(\frac{1}{A}+\frac{1}{B}\Big)\geq C,
\end{split}
\end{equation*}
and
\begin{equation*}
|\partial_{\xi_1}^3q|,~|\partial_{\xi_1}^4q|\leq C.
\end{equation*}
\end{itemize}
Thus, in similar fashion to the discussions in \eqref{8302}-\eqref{e9113}, we can finally get
\begin{equation*}
|I_{2,1}|\leq C2^{3k}\begin{cases}
\Big(\frac{1}{|\theta_k|\sigma_k}\Big)^{1/2},~\mathrm{for}~q=A-B;\\
\Big(\frac{1}{|\theta_k|}\Big)^{1/2},~\mathrm{for}~q=A+B,
\end{cases}
\end{equation*}
which implies that
\begin{equation}\label{e9127}
|I_{2,1}|\leq C2^{3k}
\Big(\frac{1}{|\theta_k|\sigma_k}\Big)^{1/2}
\end{equation}
for $q=A\pm B$ and $\sigma_k\leq \frac{1}{60}$.

\textbf{Estimate for $I_{2,2}$}. Noting that on $\mathrm{supp}\,\Xi_{2,2}$ we have $|\xi_2|\leq\frac{1}{185}$ and $\xi_1$ and $\xi_2$ are symmetric in $q(\xi)$, the estimate for $I_{2,2}$ can be constructed similarly by switching the roles of $\xi_1$ and $\xi_2$ in the above discussions for $I_{2,1}$. Thus we also have
\begin{equation}\label{9112}
|I_{2,2}|\leq C2^{3k}\Big(\frac{1}{|\theta_k|\sigma_k}\Big)^{1/2}
\end{equation}
for $q=A\pm B$ and $\sigma_k\leq\frac{1}{60}$.

\textbf{Estimate for $I_{2,3}$}. We have
\begin{equation*}
\mathrm{supp}\,\Xi_{2,3}\subset \Big\{\frac{1}{2}\leq|\xi|\leq 3,~|\xi_1|\geq\frac{1}{195},~|\xi_2|\geq\frac{1}{195},~|\xi_3|\geq\frac{1}{29}\Big\}.
\end{equation*}
By using a partition of unity to cover the support of the above $\Xi_{2,3}$, we may assume that the support of $\Xi_{2,3}$ is sufficiently small such that for any $\xi,\eta\in\mathrm{supp}\,\Xi_{2,3}$, the line segment connecting $\xi$ and $\eta$ lies wholly in
\begin{equation*}
\Omega_{2,3}:=\Big\{\frac{1}{4}\leq|\xi|\leq 4,~|\xi_1|\geq\frac{1}{200},~|\xi_2|\geq\frac{1}{200},~|\xi_3|\geq\frac{1}{30}\Big\}.
\end{equation*}
The decay estimate for $I_{2,3}$ can be obtained by switching the roles of $\xi_1$ and $\xi_3$ in the estimate of $I_{1,1}$ with $q=A-B$ and high frequencies. Indeed, for $\xi\in\Omega_{2,3}$ and $\sigma_k\leq \frac{1}{60}$ we can verify that:
\begin{itemize}
\item for $q=A-B$,
\begin{equation*}
|\partial_{\xi_1\xi_2}q|\geq C\Big|\frac{1}{A}-\frac{1}{B}\Big|\Big(\frac{1}{A}+\frac{1}{B}+\frac{1}{AB}\Big)\geq C\Big|\frac{A^2-B^2}{AB(A+B)}\Big|\geq C\sigma_k,
\end{equation*}
and
\begin{equation*}
|\partial_{\xi_2}\partial_{\xi_1}^2q|,~|\partial_{\xi_2}\partial_{\xi_1}^3q|\leq C\Big|\frac{1}{A}-\frac{1}{B}\Big|\leq C\sigma_k,
\end{equation*}
\item for $q=A+B$,
\begin{equation*}
|\partial_{\xi_1\xi_2}q|\geq C\Big(\frac{1}{A^3}+\frac{1}{B^3}\Big)\geq C,
\end{equation*}
and
\begin{equation*}
|\partial_{\xi_2}\partial_{\xi_1}^2q|,~|\partial_{\xi_2}\partial_{\xi_1}^3q|\leq C\Big(\frac{1}{A}+\frac{1}{B}\Big)\leq C.
\end{equation*}
\end{itemize}
Thus, switching the roles of $\xi_1$ and $\xi_3$ in the discussions in \eqref{e9112}-\eqref{9116} we can finally obtain that
\begin{equation*}
|I_{2,3}|\leq C2^{3k}\begin{cases}
\Big(\frac{1}{|\theta_k|\sigma_k}\Big)^{1/2},~\mathrm{for}~q=A-B;\\
\Big(\frac{1}{|\theta_k|}\Big)^{1/2},~\mathrm{for}~q=A+B,
\end{cases}
\end{equation*}
which implies
\begin{equation}\label{9113}
|I_{2,3}|\leq C2^{3k}\Big(\frac{1}{|\theta_k|\sigma_k}\Big)^{1/2}
\end{equation}
for $q=A\pm B$ and $\sigma_k\leq\frac{1}{60}$.

Combining \eqref{9110}, \eqref{e9127}, \eqref{9112} and \eqref{9113} we conclude with
\begin{equation}\label{9114}
|I_{2}|\leq C2^{3k}\Big(\frac{1}{|\theta_k|\sigma_k}\Big)^{1/2}\leq C2^{3k}\Big(\frac{\varepsilon}{|t|}\Big)^{1/2}
\end{equation}
for $q=A\pm B$ and $\sigma_{k}\leq\frac{1}{60}$.

\subsubsection{Estimates for middle frequencies}
Now we construct the estimate for $I_2(x)$ with middle frequencies such that $\frac{1}{60}< \sigma_k\leq60$. Noting that $I_2(x)$ is rotation invariant in the plane $\mathbb{R}_{x_h}^2$, we may assume that $x_2=0$. We may also assume that $\theta_k>0$ in $I_2(x)$. Indeed, if $\theta_k<0$, then we can substitute $q$ by $-q$ in the following discussions, and the results below still hold true. We introduce the operator $L_{4}$ by
\begin{equation*}
L_{4}=\frac{1}{1+\theta_k\cdot|\partial_{\xi_2}q|^2}\Big(\mathrm{id}-i\partial_{\xi_2}q\cdot\partial_{\xi_2}\Big).
\end{equation*}
Then we can check that
\begin{equation}\label{7239}
I_2(x)=2^{3k}\int_{\mathbb{R}^{3}}{L_{4}(e^{i\theta_k(x_1\xi_1+x_3\xi_3+q(\xi))})\psi_2(\xi)}d\xi=2^{3k}\int_{\mathbb{R}^{3}}{e^{i\theta_k(x_1\xi_1+x_3\xi_3+q(\xi))}\cdot L_{4}^{\top}\psi_2(\xi)}d\xi,
\end{equation}
where
\begin{equation*}
L_{4}^{\top}\psi_2=\frac{1}{1+\theta_k\cdot|\partial_{\xi_2}q|^2}\Big(\psi_2+i\partial_{\xi_2}q\cdot\partial_{\xi_2}\psi_2+i\partial^2_{\xi_2}q\cdot\psi_2-\frac{2i\theta_{k}\cdot|\partial_{\xi_2}q|^2\cdot\partial_{\xi_2}^2q}{1+\theta_k\cdot|\partial_{\xi_2}q|^2}\psi_2\Big).
\end{equation*}
\begin{itemize}
\item For $q=A-B$ we have, on $\mathrm{supp}\,\psi_2$,
\begin{equation*}
\begin{split}
&|\partial_{\xi_2}q|=\frac{|A^2-B^2|}{AB(A+B)}|\xi_2|\geq C\sigma_k|\xi_2|\geq C|\xi_2|.
\end{split}
\end{equation*}
\item For $q=A+B$ we have
\begin{equation*}
|\partial_{\xi_2}q|=\Big(\frac{1}{A}+\frac{1}{B}\Big)|\xi_2|\geq C|\xi_2|.
\end{equation*}
\end{itemize}
Furthermore, for both $q=A-B$ and $q=A+B$ we can check that
\begin{equation*}
\begin{split}
&|\partial_{\xi_2}q|\leq \frac{|\xi_2|}{A}+\frac{|\xi_2|}{B}\leq C,~~\mathrm{and}~~|\partial_{\xi_2}^2q|\leq C\bigg(\frac{1}{A}+\frac{1}{B}\bigg).
\end{split}
\end{equation*}
Thus, after introducing the above estimates into \eqref{7239} we get
\begin{equation}\label{72310}
|I_2|\leq C\cdot2^{3k}\iint_{\mathrm{supp}\,\psi_2}{\frac{1}{1+C\theta_k|\xi_2|^2}\Big(1+\frac{1}{A}+\frac{1}{B}\Big)}d\xi.
\end{equation}
We can directly check that
\begin{equation}\label{72311}
\iint_{\mathrm{supp}\,\psi_2}{\frac{1}{1+C\theta_k|\xi_2|^2}}d\xi\leq C\int_{\mathbb{R}}{\frac{1}{1+C\theta_k|\xi_2|^2}}d\xi_2\leq \frac{C}{\sqrt{\theta_k}},
\end{equation}
and
\begin{equation*}
\begin{split}
&\iint_{\mathrm{supp}\,\psi_2}{\frac{1}{1+C\theta_k|\xi_2|^2}\cdot\frac{1}{A}}d\xi\leq\int_{\mathbb{R}}{\frac{1}{1+C\theta_k|\xi_2|^2}}d\xi_2\cdot\int_{-3}^{3}\int_{-3}^{3}{\frac{1}{\sqrt{\xi_1^2+(\xi_3+\sigma_k)^2}}}d\xi_1d\xi_3.
\end{split}
\end{equation*}
We define $\Omega_{2,4}:=(-3,3)\times(-3+\sigma_k,3+\sigma_k)$ and $\tilde{B}(0,1):=\{x\in\mathbb{R}^2:|x|\leq1\}$. Then
\begin{equation*}
\begin{split}
&\int_{-3}^{3}\int_{-3}^{3}{\frac{1}{\sqrt{\xi_1^2+(\xi_3+\sigma_k)^2}}}d\xi_1d\xi_3=\iint_{\Omega_{2,4}}{\frac{1}{\sqrt{\xi_1^2+y^2}}}d\xi_1dy\\
&~~~\leq \iint_{\Omega_{2,4}-\tilde{B}(0,1)}{1}d\xi_1dy+\iint_{\tilde{B}(0,1)}{\frac{1}{\sqrt{\xi_1^2+y^2}}}d\xi_1dy\leq C.
\end{split}
\end{equation*}
Thus
\begin{equation}\label{72312}
\iint_{\mathrm{supp}\,\psi_2}{\frac{1}{1+C\theta_k|\xi_2|^2}\cdot\frac{1}{A}}d\xi\leq \frac{C}{\sqrt{\theta_k}},
\end{equation}
and similarly,
\begin{equation}\label{72313}
\begin{split}
&\iint_{\mathrm{supp}\,\psi_2}{\frac{1}{1+C\theta_k|\xi_2|^2}\cdot\frac{1}{B}}d\xi\leq \frac{C}{\sqrt{\theta_k}}.
\end{split}
\end{equation}
Plugging \eqref{72311}, \eqref{72312} and \eqref{72313} into \eqref{72310} yields
\begin{equation}\label{7214}
|I_2|\leq
\frac{C\cdot 2^{3k}}{\sqrt{\theta_k}}\leq C2^{3k}\cdot
\Big(\frac{\delta}{2^k|t|}\Big)^{1/2}=C2^{3k}\cdot
\Big(\frac{\sigma_k\varepsilon}{|t|}\Big)^{1/2}\leq C2^{3k}\cdot
\Big(\frac{\varepsilon}{|t|}\Big)^{1/2}
\end{equation}
for $q=A\pm B$ and $\frac{1}{60}<\sigma_k\leq 60$.

\subsubsection{Estimates for low frequencies}
In this case the decay estimate is derived from the oscillation integral on $\mathbb{R}_{\xi_1}$. Thus, similar to the discussions in \eqref{8302}-\eqref{e9113}, it suffices to estimate the upper bounds of $|\partial_{\xi_1}^3q(\xi)|$, $|\partial_{\xi_1}^4q(\xi)|$ and the lower bound of $|\partial_{\xi_1}^2q(\xi)|$ on some area which includes and is slightly bigger than $\mathrm{supp}\,\psi_2$, say:
\begin{equation*}
\Omega_{2,5}:=\Big\{\frac{1}{4}\leq|\xi|\leq 4,~|\xi_3|\geq\frac{1}{30}\Big\}.
\end{equation*}
Noting that $\sigma_k>60$ here.
\begin{itemize}
\item For $q=A-B$ we have
\begin{equation*}
\begin{split}
|\partial_{\xi_1}^2q|&=\Big|\Big(\frac{1}{A}-\frac{1}{B}\Big)\cdot\Big(1-\Big(\frac{1}{A^2}+\frac{1}{B^2}+\frac{1}{AB}\Big)\xi_1^2\Big)\Big|\geq\Big|\frac{1}{A}-\frac{1}{B}\Big|\cdot\Big(1-\frac{12\times16}{\sigma_k^2}\Big)\\
&\geq C\frac{|\xi_3|\sigma_k}{AB(A+B)}\geq \frac{C}{\sigma_k^2}
\end{split}
\end{equation*}
by observing that $\frac{1}{2}\sigma_k\leq A,B\leq 2\sigma_k$ and $|\xi_1|\leq 4$.
\item For $q=A+B$ we have
\begin{equation*}
\begin{split}
|\partial_{\xi_1}^2q|&=\Big|\Big(\frac{1}{A}+\frac{1}{B}\Big)\cdot\Big(1-\Big(\frac{1}{A^2}+\frac{1}{A^2}-\frac{1}{AB}\Big)\xi_1^2\Big)\Big|\geq C\Big(\frac{1}{A}+\frac{1}{B}\Big)\geq \frac{C}{\sigma_k}\geq \frac{C}{\sigma_k^2}.
\end{split}
\end{equation*}
\end{itemize}
For both $q=A-B$ and $q=A+B$ we have
\begin{equation*}
|\partial_{\xi_1}^3q|,~|\partial_{\xi_3}^4q|\leq C\Big(\Big|\frac{1}{A^3}+\frac{1}{B^3}\Big|+\Big|\frac{1}{A^5}+\frac{1}{B^5}\Big|+\Big|\frac{1}{A^7}+\frac{1}{B^7}\Big|\Big)\leq \frac{C}{\sigma_k^3}.
\end{equation*}
Thus, in similar fashion to the discussions in \eqref{8302}-\eqref{e9113}, we can finally get
\begin{equation}\label{9210}
|I_{2}|\leq C2^{3k}\Big(\frac{\sigma_k^2}{|\theta_k|}\Big)^{1/2}\leq C2^{3k}\Big(\frac{\nu^2\delta}{2^{3k}|t|}\Big)^{1/2}
\end{equation}
for $q=A\pm B$ and $\sigma_k> 60$.

Combining \eqref{9114}, \eqref{7214} and \eqref{9210} we conclude with
\begin{equation}\label{9212}
|I_{2}|\leq C2^{3k}\begin{cases}
\big(\frac{\varepsilon}{|t|}\big)^{1/2},~\mathrm{if}~\sigma_k\leq 60;\\
\big(\frac{\nu^2\delta}{2^{3k}|t|}\big)^{1/2},~\mathrm{if}~\sigma_k> 60.
\end{cases}
\end{equation}
Combining \eqref{9211} and \eqref{9212} we further have
\begin{equation}\label{9215}
|I|\leq C2^{3k}\begin{cases}
\big(\frac{\varepsilon}{|t|}\big)^{1/2},~\mathrm{if}~\sigma_k\leq 60;\\
\big(\frac{\nu^2\delta}{2^{3k}|t|}\big)^{1/2},~\mathrm{if}~\sigma_k> 60
\end{cases}\leq C2^{3k}\Big(\frac{\varepsilon\mathcal{M}_k}{|t|}\Big)^{1/2},
\end{equation}
where $\mathcal{M}_k$ is defined in \eqref{72317}. On the other hand, noting that $|I(x)|\leq C2^{3k}$, thus, combining this estimate with \eqref{9215} we finally obtain with \eqref{7231}. Now we have completed the proof of Proposition \ref{pro1}.

\section{A result of TT* argument}\label{sec3}
In this section we prove a proposition which has been used in Proposition \ref{p2}. Besides this, the proposition can also be applied to derive the space-time decay estimates for propagators in many other works such as \cite{Mu,Mu-S,B4265}.

For $(t,x)\in \mathbb{R}\times\mathbb{R}^d$ and some functions $\lambda(\xi)$ and $\psi(\xi)$, we define the operators
\begin{equation}\label{8620}
V(t)f(x)=\frac{1}{(2\pi)^{d}}\int_{\mathbb{R}^{d}}{e^{ix\cdot\xi+it\lambda(\xi)}\psi^{2}(\xi)\hat{f}(\xi)}d\xi,
\end{equation}
and
\begin{equation*}
\tilde{V}(t)f(x)=\frac{1}{(2\pi)^{d}}\int_{\mathbb{R}^{d}}{e^{ix\cdot\xi+it\lambda(\xi)}\psi(\xi)\hat{f}(\xi)}d\xi.
\end{equation*}
We assume that
\begin{equation}\label{861}
\|(V(t)f,\tilde{V}(t)f)\|_{L^{\infty}(\mathbb{R}^d)}\leq \frac{C\mu_1}{(1+\mu_{2}|t|)^{\sigma}}\|f\|_{L^{1}(\mathbb{R}^d)},
\end{equation}
and
\begin{equation}\label{851}
\|(V(t)f,\tilde{V}(t)f)\|_{L^{2}(\mathbb{R}^d)}\leq C\|f\|_{L^{2}(\mathbb{R}^d)},
\end{equation}
where $C$, $\mu_1$ and $\mu_2$ are independent of $t$, and $\sigma>0$. In the following, we use the notations
\begin{equation*}
\|f\|_{L^{q}_{t}}:=\|f\|_{L^{q}(\mathbb{R})},~~\|f\|_{L^{r}_{x}}:=\|f\|_{L^{r}(\mathbb{R}^d)},~~\mathrm{and}~~\|f\|_{L^{q}_{t}L^{r}_{x}}:=\|f\|_{L^{q}(\mathbb{R};L^{r}(\mathbb{R}^{d}))}.
\end{equation*}
We use $f\lesssim g$ to denote that $f\leq C g$ for some constant $C$ independent of $\mu_1$, $\mu_{2}$ and $\psi$. We begin from introducing some definitions \cite{Tao}:
\begin{definition}
We say that the exponent pair $(q,r)\in[2,\infty]^2$ is $\sigma$-admissible if $(q,r,\sigma)\neq(2,\infty,1)$ and
\begin{equation}\label{841}
\frac{2}{q}\leq\sigma\Big(1-\frac{2}{r}\Big).
\end{equation}
If equality holds in \eqref{841} we say that $(q,r)$ is sharp $\sigma$-admissible, otherwise we say that $(q,r)$ is non-sharp $\sigma$-admissible. Note in particular that when $\sigma>1$ the endpoint
\begin{equation*}
P_{\sigma}=\Big(2,\frac{2\sigma}{\sigma-1}\Big)
\end{equation*}
is sharp $\sigma$-admissible.
\end{definition}

\begin{proposition}\label{a1}
Let $V(t)$ be defined in \eqref{8620} and satisfy the estimates \eqref{861} and \eqref{851}. Then:\\
(i). for any $\sigma$-admissible pair $(q,r)$ such that $(q,r)\neq P_{\sigma}$ we have
\begin{equation}\label{862}
\|V(t)f\|_{L^q_t L^r_x}\lesssim C^{\frac{1}{2}}_1  \mu_1^{\frac{1}{2}-\frac{1}{r}}\mu_{2}^{-\frac{1}{q}}\|f\|_{L^2_x},~~\mathrm{where}~~C_1=\|\mathcal{F}^{-1}(\psi^{2})\|_{L^{1}_x};
\end{equation}
(ii). for any $\sigma$-admissible pair $(q,r)$ and any sharp $\sigma$-admissible pair $(\tilde{q},\tilde{r})$ such that $(q,r)\neq P_{\sigma}$ and $(\tilde{q},\tilde{r})\neq P_{\sigma}$ we have
\begin{equation}\label{865}
\bigg\|\int^{t}_{-\infty}{ V(t-s) F(s)}ds\bigg\|_{L^{q}_{t}L^{r}_{x}}\lesssim C_2^{\frac{1}{2}}(1+C_{2}^{\frac{1}{2}})^{\theta}C_{4,r}^{1-\theta}(\mu_1^{1/\sigma}\mu_{2}^{-1})^{\frac{1}{q}+\frac{1}{\tilde{q}}}\| F\|_{L^{\tilde{q}'}_{t}L^{\tilde{r}'}_{x}},
\end{equation}
where
\begin{equation}\label{8611}
\begin{split}
&C_2=\|\mathcal{F}^{-1}(\psi)\|_{L^{1}_x},~~C_{4,r}=\|\mathcal{F}^{-1}(\psi)\|_{L^{(\frac{1}{2}+\frac{1}{r})^{-1}}_x},~~\mathrm{and}~~\theta=\frac{2r}{q\sigma(r-2)};
\end{split}
\end{equation}
(iii). for any $\sigma$-admissible pairs $(q,r)$ and $(\tilde{q},\tilde{r})$ such that $(q,r)\neq P_{\sigma}$ and $(\tilde{q},\tilde{r})\neq P_{\sigma}$ we have
\begin{equation}\label{8123}
\bigg\|\int^{t}_{-\infty}{ V(t-s) F(s)}ds\bigg\|_{L^{q}_{t}L^{r}_{x}}\lesssim (C_2^{\frac{1}{2}}C_{4,r})^{1-\theta}C^{\theta}_{5,r,\tilde{r}}\mu_1^{(\frac{1}{2}-\frac{1}{\tilde{r}})(1-\theta)+\frac{1}{\sigma}(\frac{1}{q_0}+\frac{1}{\tilde{q}})\theta}\mu_{2}^{-(\frac{1}{\tilde{q}}+\frac{1}{q})}\| F\|_{L^{\tilde{q}'}_{t}L^{\tilde{r}'}_{x}},
\end{equation}
where
\begin{equation}\label{8615}
C_{5,r,\tilde{r}}=C_2^{\frac{1}{2}}\times(1+C_{2}^{\frac{1}{2}})\times\big(1+C_{4,r}+C_{4,\tilde{r}}\big),~~q_0=\frac{2r}{\sigma(r-2)},
\end{equation}
and $C_2$, $C_{4,\bigcdot}$, $\theta$ are defined in \eqref{8611}.

\end{proposition}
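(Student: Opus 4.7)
The plan is to prove all three estimates through a single $TT^*$ framework built on the key operator identity
\[
\tilde V(t)\,\tilde V(s)^{*}g \;=\; \mathcal{F}^{-1}\!\big(\psi(\xi)\,\bar{\psi}(\xi)\,e^{i(t-s)\lambda(\xi)}\hat g(\xi)\big) \;=\; V(t-s)\,g,
\]
which holds because $\psi^2=\psi\,\bar\psi$ is the natural symbol for $\tilde V\tilde V^*$. This reduces all inhomogeneous quantities of the form $\int V(t-s)F(s)\,ds$ to $\tilde V\tilde V^{*}F$, and makes the homogeneous $L^2_x\to L^q_tL^r_x$ bound for $\tilde V$ equivalent (by the abstract $TT^*$ lemma) to the $L^{q'}_tL^{r'}_x\to L^q_tL^r_x$ bound for $\tilde V\tilde V^{*}$. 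The dispersive hypothesis \eqref{861} and the energy hypothesis \eqref{851} for $V$ therefore feed directly into the analysis of $\tilde V\tilde V^{*}$.

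For (i), the plan is to interpolate the given bounds on $V(t-s)$ via Riesz--Thorin to obtain
\[
\|V(t-s)g\|_{L^r_x} \;\lesssim\; \mu_1^{1-\frac{2}{r}}\bigl(1+\mu_2|t-s|\bigr)^{-\sigma(1-\frac{2}{r})}\|g\|_{L^{r'}_x},
\]
then convolve in time. When $(q,r)$ is $\sigma$-admissible with $(q,r)\neq P_\sigma$, the $\sigma(1-2/r)$ exponent is $\geq 2/q$, so the kernel lies in a space on which Hardy--Littlewood--Sobolev (or Young, in the non-sharp range) applies, giving
\[
\bigl\|\tilde V\tilde V^{*}F\bigr\|_{L^q_tL^r_x}\;\lesssim\;\mu_1^{1-\frac{2}{r}}\mu_2^{-\frac{2}{q}}\|F\|_{L^{q'}_tL^{r'}_x}.
\]
Taking square roots via $TT^*$ yields the bound for $\tilde V$. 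To recover $V$, I would write $V(t)f=\tilde V(t)\psi(D)f$ and bound $\|\psi(D)f\|_{L^2}\leq \|\psi\|_{L^\infty}\|f\|_{L^2}\leq C_1^{1/2}\|f\|_{L^2}$, where $\|\psi\|_{L^\infty}^2 = \|\psi^2\|_{L^\infty}\leq\|\mathcal{F}^{-1}(\psi^2)\|_{L^1}=C_1$ by Hausdorff--Young. This produces the factor $C_1^{1/2}$ in \eqref{862}.

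For (ii), I first establish the bound on the non-retarded integral $\int_{-\infty}^{\infty}V(t-s)F(s)\,ds$ through the same $TT^*$ calculation, taking $(\tilde q,\tilde r)$ sharp admissible so the kernel is in the HLS range. Then I apply the Christ--Kiselev lemma, which is legal because $q>\tilde q'$ follows from $(q,r)\neq P_\sigma$; this converts the full integral into the retarded one while only losing a universal constant. The constants $C_2^{1/2}$, $(1+C_2^{1/2})^\theta$, $C_{4,r}^{1-\theta}$ come from \emph{interpolating} two versions of the bound: a ``sharp'' one obtained by composing the homogeneous estimate (i) with its dual (producing factors of $C_2^{1/2}$ from $\|\psi(D)\|_{L^2\to L^2}$ on both sides), and a ``non-sharp'' one obtained by writing the operator as $\psi(D)\circ\tilde V\circ\tilde V^{*}\circ\psi(D)$ and invoking Young's inequality with the kernel $\mathcal{F}^{-1}(\psi)\in L^{(1/2+1/r)^{-1}}$ (producing the $C_{4,r}$ factor), then interpolating at weight $\theta=2r/(q\sigma(r-2))\in[0,1]$. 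For (iii), the same interpolation strategy is applied on \emph{both} sides of the operator, which is why the constant $C_{5,r,\tilde r}$ aggregates contributions from $C_{4,r}$ and $C_{4,\tilde r}$ and the $\mu_1$ exponent splits into two pieces corresponding to each admissible pair.

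The main obstacle is bookkeeping in the non-sharp interpolation. The dispersive kernel $(1+\mu_2|t-s|)^{-\sigma(1-2/r)}$ is only guaranteed to lie in an HLS scale when the admissibility is sharp; in the non-sharp case one must either peel off some extra decay (absorbing it into a Young convolution in time) or introduce the $\psi$-convolutions in space to trade away the missing time-decay for a tighter spatial exponent, which is precisely what generates the $C_{4,\bigcdot}$ constants and determines the interpolation exponent $\theta$. Writing the auxiliary bound cleanly so that real interpolation against the sharp case produces exactly the stated scaling $\mu_1^{(\frac{1}{2}-\frac{1}{\tilde r})(1-\theta)+\frac{1}{\sigma}(\frac{1}{q_0}+\frac{1}{\tilde q})\theta}\mu_2^{-(\frac{1}{q}+\frac{1}{\tilde q})}$ is the technical heart of the argument. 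Once these two endpoint estimates are in hand, the proof is a mechanical application of the bilinear complex interpolation lemma.
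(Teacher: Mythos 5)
Your plan is a valid alternative to the paper's proof, and it follows essentially the same $TT^*$-plus-interpolation skeleton, but it departs from the paper in one genuine way: you invoke the Christ--Kiselev lemma to pass from the untruncated integral $\int_{\mathbb{R}}V(t-s)F(s)\,ds$ to the retarded integral $\int_{-\infty}^{t}V(t-s)F(s)\,ds$. The paper never uses Christ--Kiselev. Instead it works directly with the retarded bilinear form: it observes that since the TT* integrand appears under an absolute value, the restricted integral $\int_{\mathbb{R}}\int_{-\infty}^{t}|\langle\tilde V(-s)F(s),\tilde V(-t)G(t)\rangle|\,ds\,dt$ is trivially dominated by the full one, so every bound for the full bilinear form carries over to the retarded one; it then rewrites the retarded bilinear form as $\int\int_{\mathbb{R}^d}\big(\int_{-\infty}^{t}V(t-s)F(s)\,ds\big)\overline{G(t)}\,dx\,dt$ and recovers the retarded operator estimate by duality. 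This route is more self-contained -- it needs no external lemma -- and it stays entirely within the constant-tracking machinery set up for the homogeneous case, whereas your Christ--Kiselev route outsources the truncation to a black-box result (whose hypothesis $q>\tilde q'$ you correctly verify). Both are standard in the Strichartz literature and both deliver the stated power of $\mu_1,\mu_2$.

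Two smaller points of comparison. First, your recovery of the factor $C_1^{1/2}$ in (i) goes through $V(t)=\tilde V(t)\psi(D)$ together with $\|\psi\|_{L^\infty}^2\leq\|\mathcal F^{-1}(\psi^2)\|_{L^1}=C_1$; the paper instead keeps the kernel $\mathcal F^{-1}(\psi^2)*$ explicitly inside the $TT^*$ bilinear form and pulls out $C_1$ via Young's inequality there. Same constant, different bookkeeping. Second, your account of where the constants $C_2^{1/2}$, $(1+C_2^{1/2})^\theta$, $C_{4,r}^{1-\theta}$, $C_{5,r,\tilde r}$ come from is heuristically right (interpolation between a ``sharp'' endpoint dominated by $L^2$-level constants and a ``non-sharp'' endpoint dominated by a Young-type $L^{(1/2+1/r)^{-1}}$ constant), but it is vague precisely where the paper is most careful: the paper interpolates between the sharp diagonal estimate \eqref{471}, the $L^\infty_tL^2_x$ estimate \eqref{472}, and the $L^\infty_tL^r_x$ estimate \eqref{477}, and then removes ordering restrictions like $r\leq\tilde r$ by a separate duality pass. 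To turn your sketch into a proof you would need to reproduce that three-step interpolation/duality ladder and verify that each endpoint carries exactly the constants claimed in \eqref{8611} and \eqref{8615}; as written, the ``apply the same strategy on both sides'' description of (iii) skips the duality step that the paper uses to symmetrize in $(q,r)$ and $(\tilde q,\tilde r)$.
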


\begin{proof}
The proofs are based on the standard TT* argument. Here we pay our attention to the effects of $\mu_1$, $\mu_2$ and $\psi$ on the final space-time estimates of $V(t)$. First, by \eqref{861}, \eqref{851} and Riesz's interpolation theorem we obtain for $2\leq r\leq \infty$ that
\begin{equation*}
\|V(t) f\|_{L^{r}_x}\lesssim \bigg|\frac{\mu_1}{(1+\mu_{2}|t|)^{\sigma}}\bigg|^{1-\frac{2}{r}}\| f\|_{L^{r'}_x}.
\end{equation*}
Thus
\begin{equation}\label{4191}
\begin{split}
|\langle V(-s) F(s), V(-t) G(t)\rangle_{L^{2}}|&=|\langle V(t-s) F(s),\mathcal{F}^{-1}(\psi^{2})\ast G(t)\rangle_{L^{2}}|\\
&\lesssim \| V(t-s) F(s)\|_{L^{r}_x}\|\mathcal{F}^{-1}(\psi^{2})\|_{L^{1}_x}\| G(t)\|_{L^{r'}_x}\\
&\lesssim C_1\bigg|\frac{\mu_{1}}{(1+\mu_{2}|t-s|)^{\sigma}}\bigg|^{1-\frac{2}{r}}\| F(s)\|_{L^{r'}_x}\| G(t)\|_{L^{r'}_x},
\end{split}
\end{equation}
where $C_1$ is defined in \eqref{862}. For  the sharp $\sigma$-admissible pair $(q,r)=(\infty,2)$ we get from \eqref{4191} that
\begin{equation*}
\int_{\mathbb{R}}\int_{\mathbb{R}}{|\langle V(-s) F(s), V(-t) G(t)\rangle_{L^{2}}|}dsdt\lesssim C_1\| F\|_{L^{1}_{t}L^{2}_{x}}\| G\|_{L^{1}_{t}L^{2}_{x}}.
\end{equation*}
In the case $\frac{2}{q}=\sigma(1-\frac{2}{r})$ with $(q,r)\neq(\infty,2)$ and $(q,r)\neq P_{\sigma}$ we have by the Hardy-Littlewood-Sobolev inequality \cite{BB50},
\begin{equation*}
\begin{split}
&\int_{\mathbb{R}}\int_{\mathbb{R}}{|\langle V(-s) F(s), V(-t) G(t)\rangle_{L^{2}}|}dsdt\\
\lesssim& C_1\big(\mu_{1} \mu_{2}^{-\sigma}\big)^{1-\frac{2}{r}}\int_{\mathbb{R}}\int_{\mathbb{R}}\frac{1}{|t-s|^{\frac{2}{q}}}\| F(s)\|_{L^{r'}_x}\| G(t)\|_{L^{r'}_x}dsdt\\
\lesssim&C_1\big(\mu_{1} \mu_{2}^{-\sigma}\big)^{1-\frac{2}{r}}\int_{\mathbb{R}}\frac{1}{|t-s|^{\frac{2}{q}}}\| F(s)\|_{L^{r'}_x}ds\bigg\|_{L^{q}_{t}}\| G\|_{L^{q'}_{t}L_{x}^{r'}}\\
\lesssim&C_1\big(\mu_{1} \mu_{2}^{-\sigma}\big)^{1-\frac{2}{r}}\| F\|_{L^{q'}_{t}L^{r'}_{x}}\| G\|_{L^{q'}_{t}L^{r'}_{x}}.
\end{split}
\end{equation*}
When $\frac{2}{q}<\sigma(1-\frac{2}{r})$, it follows from \eqref{4191} that
\begin{equation*}
\begin{split}
&\int_{\mathbb{R}}\int_{\mathbb{R}}{|\langle V(-s) F(s), V(-t) G(t)\rangle_{L^{2}}|}dsdt\\
\lesssim &C_1  \mu_{1}^{1-\frac{2}{r}}\int_{\mathbb{R}}\int_{\mathbb{R}}\frac{1}{(1+\mu_{2}|t-s|)^{\sigma(1-\frac{2}{r})}}\| F(s)\|_{L^{r'}_x}\| G(t)\|_{L^{r'}_x}dsdt\\
\lesssim&C_1 \mu_{1}^{1-\frac{2}{r}}\bigg\|\int_{\mathbb{R}}\frac{1}{(1+\mu_{2} |t-s|)^{\sigma(1-\frac{2}{r})}}\| F(s)\|_{L^{r'}_x}ds\bigg\|_{L^{q}_{t}}\| G\|_{L^{q'}_{t}L_{x}^{r'}}\\
\lesssim&C_1 \mu_{1}^{1-\frac{2}{r}}\bigg\|\frac{1}{(1+\mu_{2} |t|)^{\sigma(1-\frac{2}{r})}}\bigg\|_{L^{\frac{q}{2}}_{t}}\| F\|_{L^{q'}_{t}L^{r'}_{x}}\| G\|_{L^{q'}_{t}L_{x}^{r'}}\\
\lesssim&C_1 \mu_{1}^{1-\frac{2}{r}}\mu_{2}^{-\frac{2}{q}}\| F\|_{L^{q'}_{t}L^{r'}_{x}}\| G\|_{L^{q'}_{t}L_{x}^{r'}}.\\
\end{split}
\end{equation*}
Now we have showed that for any $\sigma$-admissible pair $(q,r)$ such that $(q,r)\neq P_{\sigma}$, it holds
\begin{equation}\label{863}
\int_{\mathbb{R}}\int_{\mathbb{R}}{|\langle V(-s) F(s), V(-t) G(t)\rangle_{L^{2}}|}dsdt\lesssim C_1\mu_{1}^{1-\frac{2}{r}}\mu_{2}^{-\frac{2}{q}}\| F\|_{L^{q'}_{t}L^{r'}_{x}}\| G\|_{L^{q'}_{t}L_{x}^{r'}},
\end{equation}
thus
\begin{equation}\label{4194}
\begin{split}
\bigg\|\int_{\mathbb{R}}{ V(-s) F(s)}ds\bigg\|_{L^{2}_x}\lesssim C^{\frac{1}{2}}_1\mu_{1}^{\frac{1}{2}-\frac{1}{r}}\mu_{2}^{-\frac{1}{q}}\| F\|_{L^{q'}_{t}L^{r'}_{x}}.
\end{split}
\end{equation}
By the duality argument we can justify \eqref{862}. Indeed, for any $h\in L^{r'}(\mathbb{R}^{d})$ and $g\in L^{q'}(\mathbb{R})$, we have
\begin{equation}\label{11161}
\begin{split}
&\int_{\mathbb{R}}\int_{\mathbb{R}^{d}}{ V(t)f(x)\cdot\overline{h(x)g(t)}}dxdt=\int_{\mathbb{R}}\int_{\mathbb{R}^{d}}f(x)\cdot\overline{ V(-t)h(x)}dx\cdot\overline{g(t)}dt\\
&=\overline{\int_{\mathbb{R}^{d}}\int_{\mathbb{R}} V(-t)h(x)g(t)dt\cdot\overline{f(x)}dx}\lesssim \bigg\|\int_{\mathbb{R}}{ V(-t)h(x)g(t)}dt\bigg\|_{L^{2}_{x}}\cdot\|f\|_{L^{2}_{x}}\\
&\lesssim C^{\frac{1}{2}}_1  \mu_{1}^{\frac{1}{2}-\frac{1}{r}}\mu_{2}^{-\frac{1}{q}}\|h\|_{L^{r'}_{x}}\|g\|_{L^{q'}_{t}}\|f\|_{L^{2}_{x}},
\end{split}
\end{equation}
which implies \eqref{862}.

Then, similar to \eqref{863} and \eqref{4194}, we can also verify that
\begin{equation}\label{473}
\begin{split}
&\bigg\|\int_{-\infty}^{t}{ V(-s) F(s)}ds\bigg\|_{L^{2}_{x}}\lesssim C^{\frac{1}{2}}_1  \mu_{1}^{\frac{1}{2}-\frac{1}{r}}\mu_{2}^{-\frac{1}{q}}\| F\|_{L^{q'}_{t}L^{r'}_{x}},\\
&\bigg\|\int_{-\infty}^{t}{\tilde{V}(-s) F(s)}ds\bigg\|_{L^{2}_{x}}\lesssim C^{\frac{1}{2}}_2  \mu_{1}^{\frac{1}{2}-\frac{1}{r}}\mu_{2}^{-\frac{1}{q}}\| F\|_{L^{q'}_{t}L^{r'}_{x}},
\end{split}
\end{equation}
\begin{equation*}
\int_{\mathbb{R}}\int_{\mathbb{R}}{|\langle \tilde{V}(-s) F(s), \tilde{V}(-t) G(t)\rangle_{L^{2}}|}dsdt\lesssim C_2 \mu_{1}^{1-\frac{2}{r}}\mu_{2}^{-\frac{2}{q}}\| F\|_{L^{q'}_{t}L^{r'}_{x}}\| G\|_{L^{q'}_{t}L_{x}^{r'}},
\end{equation*}
and
\begin{equation*}
\int_{\mathbb{R}}\int_{-\infty}^{t}{|\langle \tilde{V}(-s) F(s), \tilde{V}(-t) G(t)\rangle_{L^{2}}|}dsdt\lesssim C_2 \mu_{1}^{1-\frac{2}{r}}\mu_{2}^{-\frac{2}{q}}\| F\|_{L^{q'}_{t}L^{r'}_{x}}\| G\|_{L^{q'}_{t}L_{x}^{r'}},
\end{equation*}
where $C_2$ is defined in \eqref{8611}. Thus
\begin{equation}\label{475}
\begin{split}
\int_{\mathbb{R}}\int_{\mathbb{R}^{d}}{\bigg(\int_{-\infty}^{t}{ V(t-s) F(s)}ds\bigg)\cdot\overline{ G(t)}}dxdt&=\int_{\mathbb{R}}\int_{-\infty}^{t}{\big\langle\tilde{V}(-s) F(s),\tilde{V}(-t) G(t)\big\rangle_{L^{2}}}dsdt\\
&\lesssim C_2 \mu_{1}^{1-\frac{2}{r}}\mu_{2}^{-\frac{2}{q}}\| F\|_{L^{q'}_{t}L^{r'}_{x}}\| G\|_{L^{q'}_{t}L_{x}^{r'}}.
\end{split}
\end{equation}
As a result, for any $\sigma$-admissible pair $(\tilde{q},\tilde{r})$ such that $(\tilde{q},\tilde{r})\neq P_{\sigma}$ we obtain with
\begin{equation}\label{471}
\|\int_{-\infty}^{t}{ V(t-s) F(s)}ds\|_{L^{\tilde{q}}_{t}L^{\tilde{r}}_{x}}\lesssim C_2 \mu_{1}^{1-\frac{2}{\tilde{r}}}\mu_{2}^{-\frac{2}{\tilde{q}}}\| F\|_{L^{\tilde{q}'}_{t}L^{\tilde{r}'}_{x}}.
\end{equation}
On the other hand, from \eqref{473} and \eqref{851} we have
\begin{equation}\label{474}
\begin{split}
\int_{\mathbb{R}}\int_{-\infty}^{t}{\langle\tilde{V}(-s) F(s),\tilde{V}(-t) G(t)\rangle_{L^{2}}}dsdt\lesssim&\int_{\mathbb{R}}\bigg\|\int_{-\infty}^{t}{\tilde{V}(-s) F(s)}ds\bigg\|_{L^{2}_{x}}\|\tilde{V}(-t) G(t)\|_{L^{2}_{x}}dt\\
\lesssim& C^{\frac{1}{2}}_2 \mu_{1}^{\frac{1}{2}-\frac{1}{\tilde{r}}}\mu_{2}^{-\frac{1}{\tilde{q}}}\| F\|_{L^{\tilde{q}'}_{t}L^{\tilde{r}'}_{x}}\| G\|_{L^{1}_{t}L^{2}_{x}}.
\end{split}
\end{equation}
Thus, similar to \eqref{471} we get
\begin{equation}\label{472}
\bigg\|\int^{t}_{-\infty}{ V(t-s) F(s)}ds\bigg\|_{L^{\infty}_{t}L^{2}_{x}}\lesssim C^{\frac{1}{2}}_2 \mu_{1}^{\frac{1}{2}-\frac{1}{\tilde{r}}}\mu_{2}^{-\frac{1}{\tilde{q}}}\| F\|_{L^{\tilde{q}'}_{t}L^{\tilde{r}'}_{x}}.
\end{equation}
We remark here that, \eqref{474} and \eqref{472} hold for any $\sigma$-admissible pair $(\tilde{q},\tilde{r})$ such that $(\tilde{q},\tilde{r})\neq P_{\sigma}$. Noting that each sharp $\sigma$-admissible pair $(q,r)$ with $(q,r)\neq P_{\sigma}$ is an interpolation between $(\infty,2)$ and $(\tilde{q},\tilde{r})$ where $(\tilde{q},\tilde{r})$ is also a sharp $\sigma$-admissible pair such that $(\tilde{q},\tilde{r})\neq P_{\sigma}$. Thus, combining \eqref{471}, \eqref{472} and Riesz's interpolation theorem, we have for all sharp $\sigma$-admissible pairs $(q,r)$ and $(\tilde{q},\tilde{r})$ such that $(\tilde{q},\tilde{r})\neq P_{\sigma}$, $(\tilde{q},\tilde{r})\neq P_{\sigma}$ and $r\leq \tilde{r}$, that
\begin{equation}\label{476}
\begin{split}
\bigg\|\int^{t}_{-\infty}{ V(t-s) F(s)}ds\bigg\|_{L^{q}_{t}L^{r}_{x}}&\lesssim C_2^{\frac{1}{2}(1+\frac{\tilde{q}}{q})}(\mu_{1}^{1/\sigma}\mu_{2}^{-1})^{\frac{1}{\tilde{q}}+\frac{1}{q}}\| F\|_{L^{\tilde{q}'}_{t}L^{\tilde{r}'}_{x}}\\
&\lesssim C_{3}(\mu_{1}^{1/\sigma}\mu_{2}^{-1})^{\frac{1}{\tilde{q}}+\frac{1}{q}}\| F\|_{L^{\tilde{q}'}_{t}L^{\tilde{r}'}_{x}}.
\end{split}
\end{equation}
where
\begin{equation*}
C_{3}=C_2^{\frac{1}{2}}(1+C_2^{\frac{1}{2}}),
\end{equation*}
and we have used the fact that $C_2^{\tilde{q}/2q}\leq 1+C_2^{1/2}$ as $\tilde{q}\leq q$ in \eqref{476}. By the duality argument similar to \eqref{11161}, the condition $ r\leq\tilde{r}$ above can be removed. Hence the second inequality in \eqref{476} holds for all sharp $\sigma$-admissible pairs $(q,r)$ and $(\tilde{q},\tilde{r})$ such that $(\tilde{q},\tilde{r})\neq P_{\sigma}$ and $(\tilde{q},\tilde{r})\neq P_{\sigma}$.

Next, it is easy to see
\begin{equation*}
\|\tilde{V}(-t) G(t)\|_{L^{2}_x}\lesssim\|\psi\widehat{G(t)}\|_{L^{2}_x}\lesssim C_4\| G(t)\|_{L^{r'}_x},
\end{equation*}
where $C_{4,r}$ is defined in \eqref{8611}. Thus from \eqref{473} and \eqref{474} we obtain
\begin{equation*}
\int_{\mathbb{R}}\int_{-\infty}^{t}{\langle\tilde{V}(-s) F(s),\tilde{V}(-t) G(t)\rangle_{L^{2}}}dsdt\lesssim C^{\frac{1}{2}}_2 \mu_{1}^{\frac{1}{2}-\frac{1}{\tilde{r}}}\mu_{2}^{-\frac{1}{\tilde{q}}}C_{4,r}\| F\|_{L^{\tilde{q}'}_{t}L^{\tilde{r}'}_{x}}\| G\|_{L^{1}_{t}L^{r'}_{x}}.
\end{equation*}
Hence similar to \eqref{471} we get for any $\sigma$-admissible pair $(\tilde{q},\tilde{r})$ such that $(\tilde{q},\tilde{r})\neq P_{\sigma}$ and any $r\in[2,\infty]$ that
\begin{equation}\label{477}
\bigg\|\int^{t}_{-\infty}{ V(t-s) F(s)}ds\bigg\|_{L^{\infty}_{t}L^{r}_{x}}\lesssim C^{\frac{1}{2}}_2  \mu_{1}^{\frac{1}{2}-\frac{1}{\tilde{r}}}\mu_{2}^{-\frac{1}{\tilde{q}}}C_{4,r}\| F\|_{L^{\tilde{q}'}_{t}L^{\tilde{r}'}_{x}}.
\end{equation}
Since each $\sigma$-admission pair $(q,r)$ satisfying $(q,r)\neq P_{\sigma}$ is an interpolation between $(\infty,r)$ and any sharp $\sigma$-admission pair $(q_0,r)$ with $(q_0,r)\neq P_{\sigma}$, it follows from \eqref{477}, \eqref{476} with $q=q_0=\frac{2r}{\sigma(r-2)}$, and Riesz's interpolation theorem that the inequality
\begin{equation}\label{866}
\bigg\|\int^{t}_{-\infty}{ V(t-s) F(s)}ds\bigg\|_{L^{q}_{t}L^{r}_{x}}\lesssim (C_2^{\frac{1}{2}}C_{4,r})^{1-\theta}C_3^{\theta}(\mu_{1}^{1/\sigma}\mu_{2}^{-1})^{\frac{1}{q}+\frac{1}{\tilde{q}}}\| F\|_{L^{\tilde{q}'}_{t}L^{\tilde{r}'}_{x}}
\end{equation}
holds for any $\sigma$-admissible pair $(q,r)$ and any sharp $\sigma$-admissible pair $(\tilde{q},\tilde{r})$ such that $(q,r)\neq P_{\sigma}$ and $(\tilde{q},\tilde{r})\neq P_{\sigma}$, where $\theta$ is defined in \eqref{8611}. Thus \eqref{865} is proved.

Furthermore, as it was done in \eqref{476}, we amplify the constant in the right side of \eqref{866} to obtain that
\begin{equation}\label{868}
\bigg\|\int^{t}_{-\infty}{ V(t-s) F(s)}ds\bigg\|_{L^{q}_{t}L^{r}_{x}}\lesssim C_{5,r,\tilde{r}}(\mu_{1}^{1/\sigma}\mu_{2}^{-1})^{\frac{1}{q}+\frac{1}{\tilde{q}}}\| F\|_{L^{\tilde{q}'}_{t}L^{\tilde{r}'}_{x}}
\end{equation}
by observing
\begin{equation*}
\begin{split}
&(C_2^{\frac{1}{2}}C_{4,r})^{1-\theta}C_3^{\theta}=C_2^{\frac{1}{2}}(1+C_{2}^{\frac{1}{2}})^{\theta}C_{4,r}^{1-\theta}\leq C_{5,r,\tilde{r}},
\end{split}
\end{equation*}
where $C_{5,r,\tilde{r}}$ is defined in \eqref{8615}. Also, by the duality argument similar to \eqref{11161} we can prove that \eqref{868} holds for any sharp $\sigma$-admissible pair $(q,r)$ and any $\sigma$-admissible pair $(\tilde{q},\tilde{r})$ such that $(q,r)\neq P_{\sigma}$ and $(\tilde{q},\tilde{r})\neq P_{\sigma}$. Again, since each $\sigma$-admission pair $(q,r)$ satisfying $(q,r)\neq P_{\sigma}$ is an interpolation between $(\infty,r)$ and any sharp $\sigma$-admission pair $(q_0,r)$ with $(q_0,r)\neq P_{\sigma}$, it follows from \eqref{477}, \eqref{868} with $q=q_0=\frac{2r}{\sigma(r-2)}$, and Riesz's interpolation theorem that the inequality
\begin{equation*}
\bigg\|\int^{t}_{-\infty}{ V(t-s) F(s)}ds\bigg\|_{L^{q}_{t}L^{r}_{x}}\lesssim (C_2^{\frac{1}{2}}C_{4,r})^{1-\theta}C^{\theta}_{5,r,\tilde{r}}\mu_{1}^{(\frac{1}{2}-\frac{1}{\tilde{r}})(1-\theta)+\frac{1}{\sigma}(\frac{1}{q_0}+\frac{1}{\tilde{q}})\theta}\mu_{2}^{-(\frac{1}{\tilde{q}}+\frac{1}{q})}\| F\|_{L^{\tilde{q}'}_{t}L^{\tilde{r}'}_{x}}
\end{equation*}
holds for all $\sigma$-admissible pairs $(q,r)$ and $(\tilde{q},\tilde{r})$ such that $(q,r)\neq P_{\sigma}$ and $(\tilde{q},\tilde{r})\neq P_{\sigma}$, where $\theta$ is defined by \eqref{8611}. Thus \eqref{8123} is proved. Now we have completed the proof of Proposition \ref{a1}.
\end{proof}

We remark here that, by using the method in \cite{Tao}, estimates that are similar to \eqref{862}, \eqref{865} and \eqref{8123} can also be constructed for the endpoint $P_{\sigma}$ as $\sigma>1$. However, to avoid the redundancy we are not going to discuss it further. We now give some examples of the applications of Proposition \ref{a1}.

\begin{example}
In the strong stratified limit for the 3D inviscid Boussinesq equations studied in \cite{B4265} (Lemma 2.3), the corresponding quantities are
\begin{equation*}
V(t)=U_N(t),~~\mu_1=C,~~\mu_2=N,~~\sigma=\frac{1}{2},~~d=3,
\end{equation*}
and $\psi$ is a real-valued function in $\mathcal{S}(\mathbb{R}^3)$ satisfying $\mathrm{supp}\,\psi\subset\{2^{-2}\leq|\xi|\leq 2^2\}$ and $\psi=1$ on $\{2^{-1}\leq |\xi|\leq2\}$. It can be checked that
\begin{equation*}
\|\mathcal{F}^{-1}(\psi^2)\|_{L^{1}_x}+\|\mathcal{F}^{-1}(\psi)\|_{L^{1}_x}+\|\mathcal{F}^{-1}(\psi)\|_{L^{(\frac{1}{2}+\frac{1}{r})^{-1}}_x}\leq C,
\end{equation*}
thus from \eqref{862} and \eqref{8123} in Proposition \ref{a1} we obtain, for some constants $C$ that are independent of $f$, $F$ and $N$, that
\begin{equation*}
\|U_N(t)f\|_{L^q_t L^r_x}\leq CN^{-\frac{1}{q}}\|f\|_{L^2_x},
\end{equation*}
and
\begin{equation*}
\|\int^t_{-\infty}U_N(t-s)F(s)\|_{L^q_tL^r_x}\leq CN^{-\frac{1}{q}-\frac{1}{\tilde{q}}}\|F\|_{L^{\tilde{q}'}_tL^{\tilde{r}'}_x},
\end{equation*}
where the components $q,r,\tilde{q},\tilde{r}$ satisfy
\begin{equation*}
2\leq r,\tilde{r}\leq \infty,~~4\leq q,\tilde{q}\leq\infty,~~\frac{2}{q}\leq\frac{1}{2}\Big(1-\frac{2}{r}\Big),~~\mathrm{and}~~\frac{2}{\tilde{q}}\leq\frac{1}{2}\Big(1-\frac{2}{\tilde{r}}\Big).
\end{equation*}
\end{example}

\begin{example}
In the singular limit problems of the two-layer shallow water equations studied in \cite{Mu} (Proposition 3.2), we have
\begin{equation*}
V(t)=\Lambda_k(t),~~\mu_1=2^{2k},~~\mu_2=\frac{2^k}{\delta},~~\sigma=\frac{1}{2},~~d=2,~~\mathrm{and}~~\psi(\xi)=\tilde{\varphi}_k(\xi),
\end{equation*}
where $\tilde{\varphi}_k(\xi)=\tilde{\varphi}(2^{-k}\xi)$ with $\tilde{\varphi}$ being a real-valued function in $\mathcal{S}(\mathbb{R}^2)$ satisfying $\mathrm{supp}\,\tilde{\varphi}\subset\{2^{-1}\leq|\xi|\leq 3\}$ and $\tilde{\varphi}=1$ on $\{3/4\leq |\xi|\leq8/3\}$. Noting that
\begin{equation*}
\|\mathcal{F}^{-1}(\tilde{\varphi}_k^2)\|_{L^{1}_x}+\|\mathcal{F}^{-1}(\tilde{\varphi}_k)\|_{L^{1}_x}\leq C,~~\mathrm{and}~~\|\mathcal{F}^{-1}(\tilde{\varphi}_k)\|_{L^{(\frac{1}{2}+\frac{1}{r})^{-1}}_x}\leq C2^{2k(\frac{1}{2}-\frac{1}{r})},
\end{equation*}
thus, by \eqref{862} and \eqref{865} in Proposition \ref{a1} we derive, for some constants $C$ that are independent of $f$, $F$, $\delta$ and $k$, that
\begin{equation*}
\|\Lambda_k(t)f\|_{L^q_t L^r_x}\leq C2^{k(1-\frac{2}{r}-\frac{1}{q})}\delta^{\frac{1}{q}}\|f\|_{L^2_x},
\end{equation*}
and
\begin{equation*}
\|\int^t_{-\infty}\Lambda_k(t-s)F(s)\|_{L^q_tL^r_x}\leq C2^{k(1-\frac{2}{r}-\frac{1}{q}+\frac{3}{\tilde{q}})}\delta^{\frac{1}{q}+\frac{1}{\tilde{q}}}\|F\|_{L^{\tilde{q}'}_tL^{\tilde{r}'}_x},
\end{equation*}
where the components $q,r,\tilde{q},\tilde{r}$ satisfy
\begin{equation*}
2\leq r,\tilde{r}\leq \infty,~~4\leq q\leq\infty,~~\frac{2}{q}\leq\frac{1}{2}\Big(1-\frac{2}{r}\Big),~~\mathrm{and}~~\frac{2}{\tilde{q}}=\frac{1}{2}\Big(1-\frac{2}{\tilde{r}}\Big).
\end{equation*}
\end{example}

\section*{Compliance with ethical standards}
\textbf{Funding.} The author is supported by National Natural Science Foundation of China (Grant No. 12301279), Anhui Provincial Natural Science Foundation (Grant No. 2308085QA03), and Excellent University Research and Innovation Team in Anhui Province (Grant No. 2024AH010002).\\
\\
\textbf{Data availability statement.} This paper has no associated data.\\
\\
\textbf{Conflict of interest.} The author declares that there is no conflict of interest.


\begin{thebibliography}{99}
\bibitem{BB50}
     \newblock H. Bahouri, J-Y. Chemin and R. Danchin, Fourier analysis and nonlinear partial differential equations. Springer, Heidelberg, (2011).



  \bibitem{M.Ca}
     \newblock M. Caggio and \v{S}. Ne\v{c}asov\'{a},
     \newblock Inviscid incompressible limits for rotating fluids,
     \newblock \emph{Nonlinear Anal}.
      \textbf{163} (2017), 1-18.

\bibitem{B-C1}
     \newblock N. Chaudhuri,
     \newblock Multiple scales and singular limits of perfect fluids,
     \newblock \emph{J. Evol. Equ}.
      \textbf{22} (2022), paper No. 5, 32 pp.



\bibitem{C-book}
     \newblock J-Y. Chemin, B. Desjardins, I. Gallagher and E. Grenier, Mathematical Geophysics-An introduction to rotating fluids and the Navier-Stokes equations. Clarendon Press, Oxford, (2006).

  \bibitem{B8151}
     \newblock J-Y. Chemin, B. Desjardins, I. Gallagher and E. Grenier,
     \newblock Anisotropy and dispersion in rotating fluids,
     \newblock \emph{Stud. Math. Appl}.
      \textbf{31} (2002), 171-192.

\bibitem{J-Y}
     \newblock J-Y. Chemin, B. Desjardins, I. Gallagher and E. Grenier,
     \newblock Fluids with anisotropic viscosity, Special issue for R. Temam¡¯s 60th birthday.
     \newblock \emph{M2AN Math. Model. Numer. Anal}.
      \textbf{34} (2000), 315-335.





 \bibitem{B8159}
     \newblock A. Dutrifoy,
     \newblock Examples of dispersive effects in non-viscous rotating fluids,
     \newblock \emph{J. Math. Pures. Appl}.
      \textbf{84} (2005), 331-356.


  \bibitem{B8158}
     \newblock T. Egashira and R. Takada,
     \newblock  Large time behavior of solutions to the 3D rotating Navier-Stokes equations,
     \newblock \emph{J. Math. Fluid Mech}.
      \textbf{25} (2023), paper No. 23, 31 pp.



  \bibitem{B-F6}
     \newblock E. Feireisl, I. Gallagher and A. Novotn\'{y},
     \newblock  A singular limit for compressible rotating fluids,
     \newblock \emph{SIAM J. Math. Anal}.
      \textbf{44} (2012), 192-205.


  \bibitem{B-F4}
     \newblock E. Feireisl, I. Gallagher, D. Gerard-Varet and A. Novotn\'{y},
     \newblock  Multi-scale analysis of compressible viscous and rotating fluids,
     \newblock \emph{Commun. Math. Phys}.
      \textbf{314} (2012), 641-670.


  \bibitem{B-F2}
     \newblock E. Feireisl, Y. Lu and A. Novotn\'{y},
     \newblock  Rotating compressible fluids under strong stratification,
     \newblock \emph{Nonlinear Anal. Real World Appl}.
      \textbf{19} (2014), 11-18.


  \bibitem{B-F1}
     \newblock E. Feireisl and A. Novotn\'{y},
     \newblock  Scale interactions in compressible rotating fluids,
     \newblock \emph{Ann. Mat. Pura Appl}.
      \textbf{193} (2014), 1703-1725.


  \bibitem{B-F3}
     \newblock E. Feireisl and A. Novotn\'{y},
     \newblock  Multiple scales and singular limits for compressible rotating fluids with general initial data,
     \newblock \emph{Comm. Partial Differential Equations}.
      \textbf{39} (2014), 1104-1127.


  \bibitem{Guo-2023-CPAM}
     \newblock Y. Guo, C. Huang, B. Pausader and K. Widmayer,
     \newblock  On the stabilizing effect of rotation in the 3d Euler equations,
     \newblock \emph{Comm. Pure Appl. Math}.
      \textbf{76} (2023), 3553-3641.




  \bibitem{B8156}
     \newblock T. Iwabuchi and R. Takada,
     \newblock Dispersive effect of the Coriolis force and the local well-posedness for the Navier-Stokes equations in the rotational framework,
     \newblock \emph{Funkcial. Ekvac}.
      \textbf{58} (2015), 365-385.


  \bibitem{B8157}
     \newblock T. Iwabuchi and R. Takada,
     \newblock Global solutions for the Navier-Stokes equations in the rotational framework,
     \newblock \emph{Math. Ann}.
      \textbf{357} (2013), 727-741.

 \bibitem{Jia-2020}
     \newblock H. Jia and R. Wan,
     \newblock Long time existence of classical solutions for the rotating Euler equations and related models in the optimal Sobolev space,
     \newblock \emph{Nonlinearity}.
      \textbf{33} (2020), 3763-3780.



  \bibitem{Tao}
     \newblock M. Keel and T. Tao,
     \newblock Endpoint Strichartz estimates,
     \newblock \emph{Amer. J. Math}.
      \textbf{120} (1998), 955-980.


\bibitem{K-M-1}
     \newblock S. Klainerman and A. Majda,
     \newblock Singular limits of quasilinear hyperbolic system with large parameters and the incompressible limit of compressible fluids,
     \newblock \emph{Commum. Pure. Appl. Math}.
     \textbf{34} (1981), 481-524.

\bibitem{K-M-2}
     \newblock S. Klainerman and A. Majda,
     \newblock Compressible and incompressible fluids,
     \newblock \emph{Commum. Pure. Appl. Math}.
     \textbf{35} (1982), 629-653.


\bibitem{B8153}
     \newblock Y. Koh, S. Lee and R. Takada,
     \newblock Dispersive estimates for the Navier-Stokes equations in the rotational framework,
     \newblock \emph{Adv. Differ. Equ}.
     \textbf{19} (2014), 857-878.

\bibitem{b-Koh-2014-JDE}
     \newblock Y. Koh, S. Lee and R. Takada,
     \newblock Strichartz estimates for the Euler equations in the rotational framework,
     \newblock \emph{J. Differ. Equ}.
     \textbf{256} (2014), 707-744.


 \bibitem{B-K1}
     \newblock Y-S. Kwon, D. Maltese and A. Novotn\'{y},
     \newblock Multiscale analysis in the compressible rotating and heat conducting fluids,
     \newblock \emph{J. Math. Fluid Mech}.
      \textbf{20} (2018), 421-444.

\bibitem{M-book}
     \newblock A. Majda, Compressible fluid flow and systems of conservation laws in several space variables. Applied Mathematical Sciences, Springer-Verlag, New York, 53, (1984).


    \bibitem{Mu}
     \newblock P. Mu,
     \newblock Singular limits of the Cauchy problem to the two-layer rotating shallow water equations,
     \newblock \emph{J. Differ. Equ}.
     \textbf{298} (2021), 59-94.

\bibitem{Mu-S}
    \newblock P. Mu and S. Schochet,
     \newblock Dispersive estimates for the inviscid rotating stratified Boussinesq equations in the stratification-dominant three-scale limit,
     \newblock \emph{J. Math. Pures. Appl}.
     \textbf{158} (2022), 90-119.



\bibitem{B-T1}
    \newblock  \v{S}. Ne\v{c}asov\'{a} and T. Tang,
     \newblock  On a singular limit for the compressible rotating Euler system,
     \newblock \emph{J. Math. Fluid Mech}.
     \textbf{22} (2020), paper No. 43, 14 pp.


\bibitem{B8152}
    \newblock V-S. Ngo,
     \newblock  Rotating fluids with small viscosity,
     \newblock \emph{Int. Math. Res. Not. IMRN}.
     \textbf{10} (2009), 1860-1890.

\bibitem{Ngo-DCDS}
    \newblock V-S. Ngo and S. Scrobogna,
     \newblock  Dispersive effects of weakly compressible and fast rotating inviscid fluids,
     \newblock \emph{Discrete Contin. Dyn. Syst}.
     \textbf{38} (2018), 749-789.





\bibitem{G.S-2022}
     \newblock G. Sbaiz,
     \newblock Fast rotation limit for the 2-D non-homogeneous incompressible Euler equations,
     \newblock \emph{J. Math. Anal. Appl}.
     \textbf{512} (2022), paper No. 126140, 41 pp.


\bibitem{B4265}
     \newblock R. Takada,
     \newblock Strongly stratified limit for the 3D inviscid Boussinesq equations,
     \newblock \emph{Arch. Rational Mech. Anal}.
     \textbf{232} (2019), 1475-1503.

\bibitem{b-Takada-2016-Jap}
     \newblock R. Takada,
     \newblock Long time existence of classical solutions for the 3D incompressible rotating Euler equations,
     \newblock \emph{J. Math. Soc. Japan}.
     \textbf{68} (2016), 579-608.


\end{thebibliography}
\end{document}